\documentclass[12pt, reqno]{amsart}

\usepackage[utf8]{inputenc}
\usepackage[T1]{fontenc}
\usepackage{tikz,tikz-3dplot}
\usepackage{amsmath,amsfonts,amssymb,amsmath,latexsym,mathrsfs}
\usepackage{shuffle}
\usepackage[all]{xy}
\usepackage{stackengine}
 
\usepackage{enumerate}

\usepackage{hyperref}
 
\usepackage{tabu}
\usepackage{tikz-cd} 

\usepackage{comment}
\usepackage{url}
\usepackage{tikz}

\usepackage{xfrac}

\newtheorem{theorem}{Theorem}[section]

\newtheorem{proposition}[theorem]{Proposition}
\newtheorem{definition}[theorem]{Definition}

\newtheorem{corollary}[theorem]{Corollary}

\newtheorem{remark}[theorem]{Remark}

\newtheorem{lemma}[theorem]{Lemma} 

\newtheorem{example}[theorem]{Example}

\begin{document}

\title{On quandle representations}
\keywords{Quandles, enveloping groups, representations}

\author{Mohamad \textsc{Maassarani} }
\maketitle
\begin{abstract} A unitary finite dimensional quandle representation is decomposable into a direct sum of irreducible represenations. Not all quandle representations satisfy this property. We prove that a finite dimensional quandle represenation $\rho :Q \to GL(V) $ of a finite quandle $Q$ over $\mathbb{C}$ is decomposable into a direct sum of irreducibles if and only if every element in the image of $\rho$ is diagonlizable. We show that an irreducible representation $\rho :Q \to GL(V)$ of a finite quandle over $\mathbb{C}$ is unitary for some inner product if and only if every element of the image of $\rho$ has determinant of modulus $1$. It follows that any irreducible representation of a finite quandle $Q$ over $\mathbb{C}$ can be twisted by a quandle character to obtain a unitary irreducible representation. We also prove that the enveloping group $G(Q)$, of a finite quandle $Q$, admit a faithfull finite dimensional unitary representation over $\mathbb{C}$ and that the irreducible representations of a finite quandle $Q$ over $\mathbb{C}$ are $1$-dimensional if and only if $G(Q)$ is abelian. Finaly, we determine the irreducible representations over $\mathbb{C}$ of a family of finite quandles.
\end{abstract}
\section*{Introduction and main results}
A quandle is a set $Q$ equipped with a binary operation $\triangleright: Q\times Q \to Q$ satisfying some axioms. A group $G$ equipped with the binary operation $x\triangleright y=xyx^{-1}$ defines a quandle called the conjugacy quandle of $G$. A quandle representation of a quandle $Q$ over a vector space $V$ is a quandle morphism from $Q$ to the conjugacy quandle of $GL(V)$, i.e. $\rho(x\triangleright y)=\rho(x)\rho(y)\rho(x)^{-1}$. A subrepresentation of $\rho$ is a subspace of $V$ stable under the image of $\rho$ and $\rho$ (resp. a subrepresentation) is called irreducible if the only subrepresentations are $0$ and $V$ (respectively the subrepresentation). A quandle character of $Q$ is a quandle morphism $\chi : Q\to \mathbb{C}^\times$ where $\mathbb{C}^\times$ is considered as a conjugacy quandle, i.e. $\chi(x\triangleright y)=\chi (y)$. A quandle representation is unitary if its image preserves an inner product. 
\subsection*{Outline of the paper}These notes are divided into $5$ sections.\\\\
In section $1$, we review definitions and properties of quandles, quandle morphisms, conjugacy quandle, inner automorphism groups of quandles, quandle representations and the enveloping group of a quandle.\\\\
In section $2$, we prove that a finite dimensional unitary quandle representation is decomposable into a direct sum of irreducible subrepresentations and that a finite dimensional quandle representation $\rho$ of a finite quandle $Q$ over $\mathbb{C}$ is decomposable into direct sum of irreducibles if and only if the elements of $\rho(Q)$ are diagonalizable. \\\\
In section $3$, we prove that a finite dimensional irreducible quandle representation over $\mathbb{C}$ of a finite quandle is unitary for some inner product if and only if the determinant of any element in the image of the representation has modulus $1$. We also show that up to multiplication by a quandle character a finite dimensional irreducible representation of a finite quandle over $\mathbb{C}$ is unitary.\\\\
In section $4$, we prove that the enveloping group of a finite quandle admits a faithfull finite dimensional unitary representation over $\mathbb{C}$ and that the irreducible representations of a finite quandle over $\mathbb{C}$ are $1$-dimensional if and only if the enveloping group is abelian.\\\\
In section $5$, we determine the irreducible representations over $\mathbb{C}$ of the family of quandles $Q_{n,m}$  ($n,m\geq 1$) consisting of the set $$\{x_i \vert i \in \mathbb{Z}/n\mathbb{Z} \} \cup \{y_i \vert i \in \mathbb{Z}/m\mathbb{Z} \}$$ equipped with the binary operation defined by :
$$x_i \triangleright y_j=y_{j+1},\quad y_i\triangleright x_j=x_{j+1} ,\quad x_{i}\triangleright x_j=x_j,\quad y_i\triangleright y_j=y_j.$$

\section{Reminders on quandles}
\subsection{Quandle}
A quandle is a set $Q$ equipped with a binary operation $\triangleright$  such that :
\begin{itemize}
\item[-] $x\triangleright x=x$ for $x\in Q$.
\item[-] For all $x,y \in Q$ there exist a unique $z\in Q$, such that $x \triangleright z=y$.
\item[-] $x \triangleright( y\triangleright z)= (x \triangleright y) \triangleright (x \triangleright z)$, for $x,y,z \in Q$. 
\end{itemize}
The conjugacy quandle $Conj(G)$ of a group $G$ is the set $G$ equipped with the binary operation defined by $x\triangleright y= xyx^{-1}$  for $x,y\in G$.\\\\  
A map between two quandles $(Q_1,\triangleright_1)$ and $(Q_2 ,\triangleright_2)$ satisfying : $$f(z \triangleright_1 w)=f(z) \triangleright_2 f(w),$$
for $z,w \in Q$ is called a quandle mophism. Composition of quandle morphisms gives quandle morphisms. A group morhism is a quandle morphism if the groups are considered as conjugacy quandles.
\subsection{Inner automorphism group}
 For $Q$ a quandle and $x\in Q$, the left translation $L_x$ given by $L_x(y)=x\triangleright y$ for $y\in Q$ is a bijective quandle morphism. The inner automorphism group $Inn(Q)$ of a quandle $Q$ is the subgroup of bijections of $Q$ generated by left translations. The map from $Q$ to $Inn(Q)$ assigning to an element the corresponding left translation is quandle morphism with respect to the conjugacy quandle structure on $Inn(Q)$.
\subsection{Quandle representation}
Quandle representations where introduced in \cite{rep}. A representation of a quandle $Q$ is a vector space $V$ and a quandle morphism $\rho: Q \to Conj(\mathrm{GL}(V))$, i.e. : 
$$\rho(x\triangleright y)=\rho(x) \rho(y) \rho(x)^{-1},$$
for all $x,y \in Q$.
A subrepresentation of $\rho$ is a vector subspace $W\subset V$, stable under the elements of $\rho(Q)$.  An irreducible representation is a representation $V$ that has no subrepresentations other than $0$ and $V$. We define intertwining operators and equivalent representations as for group representations.\\\\
In \cite{Qrep}, we define a quandle character of a quandle $Q$ as a quandle morphism $\chi : Q \to \mathbb{C}^\times$, i.e. :$$\chi(x\triangleright y) =\chi(y),$$ for $x,y\in Q$. Quandle characters of $Q$ correspond to functions of $Q$ into $\mathbb{C}^\times$ that are constant on the orbits of $Q$ under $Inn(Q)$.\\\\
A quandle representation $\rho : Q \to GL(V)$ will be called unitary if $V$ is endowed with an inner product for wich the elements of $\rho(Q)$ are isometries.
\subsection{Enveloping group}
The enveloping group $G(Q)$ of a quandle $Q$ is the group :
$$G(Q)=\langle x \in Q \vert xyx^{-1}=x\triangleright y , \text{ for } x,y\in Q\rangle.$$
The enveloping group is also called in the litterature : the associated group, adjoint group or structure group. The group $G(Q)$ is infinite and the map $\varphi_Q : Q \to G(Q)$ assigning to $x\in Q$ the corresponding generator is a quandle morphism for the conjugcay quandle structure on $G(Q)$. $\varphi_Q$ is universal with respect to quandle morphisms into conjugacy quandles : for $f:Q\to Conj(G)$ a quandle morphism there is a unique group morphism $\tilde{f}:G(Q)\to G$ such that $f=\tilde{f}\circ\varphi_Q$. \\\\
If $f:Q\to Q'$ is a quandle morphism between two quandles there is a unique group morphism $G(f):G(Q)\to G(Q')$ such that the following diagram commutes :
$$\begin{tikzcd}
G(Q) \arrow{r}{G(f)} &G(Q')  \\ 
Q\arrow{u}{\varphi_Q} \arrow{r}{f} & Q'\arrow{u}{\varphi_{Q'}} 
\end{tikzcd} $$
The assignement $Q\mapsto G(Q)$, $f\mapsto G(f)$ is functorial.

\section{Decomposition of quandle representations}
\begin{definition}
A quandle representation will be called completely reducible if it can be decomposed as the direct sum of irreducible subrepresentations.
\end{definition}

\begin{theorem}
A finite dimensional unitary quandle representation is completely reducible.
\end{theorem}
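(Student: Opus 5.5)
The plan is to argue by induction on $\dim V$, using orthogonal complements exactly as in the classical proof for unitary group representations. Let $\rho : Q \to GL(V)$ be unitary for an inner product $\langle\cdot,\cdot\rangle$. Every element of $\rho(Q)$ is then an isometry.

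The key observation is that orthogonal complements of subrepresentations are again subrepresentations. Let $W\subset V$ be stable under $\rho(Q)$. Since $V$ is finite dimensional, each $\rho(x)$ is an invertible isometry. It therefore maps $W$ bijectively onto $W$, so $\rho(x)^{-1}=\rho(x)^*$ also preserves $W$. Now take $v\in W^\perp$ and $w\in W$. Then
$$\langle \rho(x)v,w\rangle=\langle v,\rho(x)^{*}w\rangle=\langle v,\rho(x)^{-1}w\rangle=0,$$
so $\rho(x)v\in W^\perp$. Hence $V=W\oplus W^\perp$ is a decomposition into subrepresentations. Both summands inherit the inner product, so the restrictions of $\rho$ to $W$ and to $W^\perp$ are again unitary quandle representations. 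Note that the quandle-morphism condition is never used; only the stability of subspaces under the set $\rho(Q)$ matters.

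The induction runs as follows.
\begin{itemize}
\item If $V=0$, it is the empty direct sum, and there is nothing to prove.
\item If $V$ is irreducible, we are done.
\item Otherwise, pick a proper nonzero subrepresentation $W$. Then $V=W\oplus W^\perp$ with both summands of strictly smaller dimension.
\item By the induction hypothesis applied to these unitary subrepresentations, each summand is a direct sum of irreducible subrepresentations.
\end{itemize}
These irreducibles are also subrepresentations of $V$, which gives the desired decomposition.

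I see no real obstacle here. The only point that needs care is the step $\rho(x)^{-1}W= W$, which is what allows us to use $\rho(x)^*=\rho(x)^{-1}$. It holds because $\rho(x)W\subset W$ and $\rho(x)$ is injective on the finite-dimensional space $W$.
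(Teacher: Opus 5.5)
Your proof is correct and uses essentially the same orthogonal-complement argument as the paper. The paper peels off an irreducible subrepresentation $V_1$ and recurses on $V_1^\perp$, whereas you split along an arbitrary proper subrepresentation; your explicit justification that $W^\perp$ is stable (via $\rho(x)W=W$ and $\rho(x)^*=\rho(x)^{-1}$) fills in a step the paper only asserts.
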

\begin{proof}
Let $V$ be the vector space of the representation and $Q$ the quandle. For dimensional reasons $V$ contains an irreducible subrepresentations $V_1$. Since $V_1$ is stable under the elements of $Q$ the orthogonal $V^1$ of $V_1$ is stable under the elements of $Q$. $V$ hence decomposes as the sum of two subrepresentations $V_1\oplus V^1$ with $V_1$ irreducible and $V^1$ unitary. For dimensional reasons $V^1$ has an irreducible subrepresentation $V_2$ and the orthogonal $V^2$ of $V_2$ in $V^1$ is stable under the element of $Q$. We now have $V=V_1\oplus V_2 \oplus V^2$. Iterating the process we get that $V$ decomposes as a direct sum of irreducible subrepresentations.
\end{proof}
\begin{example}
Let $Q$ be a finite quandle and $R\subset Q$ such that for $x\in Q$ and $y\in R$, $x\triangleright y \in R$ $($i.e. $R$ is a union of orbits under the action of $Inn(Q)$$)$. For $x\in Q$ the left translation $L_x$ induces a permutation of $R$. Hence, it induces a unitary automorphism $\rho_R(x)$ of the free complex vector space $\mathbb{C}R$ over $R$. Since $L_{x\triangleright x'}=L_xL_{x'}L_x^{-1}$. The assignement $\rho_R :Q \to GL(\mathbb{C}R), x\mapsto\rho_R(x)$ is a unitary quandle representation. For $R=Q$ the representation $\rho_R$ and the regular representation of $Q$ introduced in \cite{rep} are dual.
\end{example}
As noticed in \cite{nondec}, in general a quandle representation is not always completely reducible. Indeed, if $Q$ is a quandle the mapping $\rho :Q \to GL(\mathbb{C}^2)$ wich assigns to every element of $Q$ the endomorphism having the following matrice in the canonical basis :
\[\begin{pmatrix}
1 & 1\\
0 & 1
\end{pmatrix}\]
is a quandle representation. As one checks this representation is not completely reducible.\\\\
Let $\rho : Q \to GL(V)$ be a quandle representation, by the universal property of the map $\varphi_Q$ from $Q$ to the envelopping group $G(Q)$ there is a unique group representation $\rho_{G(Q)}: G(Q)\to GL(V)$ such that $\rho_{G(Q)}=\rho \circ \varphi_Q$.
\begin{proposition}
\item[1)] The assignement $\rho \mapsto \rho_{G(Q)}$ is a $1$ to $1$ correspondance between quandle representations of $Q$ and group representations of $G(Q)$.
\item[2)] A subrepresentation for $Q$ with respect to $\rho$ is a subrepresentations for $G(Q)$ with respect to $\rho_{G(Q)}$ and vice versa.
\end{proposition}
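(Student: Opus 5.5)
Note first that the paper's sentence before the proposition should read $\rho=\rho_{G(Q)}\circ\varphi_Q$. This is what the universal property gives: $\rho$ is a quandle morphism into $Conj(GL(V))$, so there is a unique group morphism $\rho_{G(Q)}:G(Q)\to GL(V)$ with $\rho=\rho_{G(Q)}\circ\varphi_Q$. I will use the statement in this form throughout.

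For 1), I will construct an inverse map. Given a group representation $\pi:G(Q)\to GL(V)$, set $\pi_Q=\pi\circ\varphi_Q$. This is a composition of quandle morphisms, since a group morphism is a quandle morphism for the conjugacy structures. Hence $\pi_Q$ is a quandle representation of $Q$ on the same space $V$. It remains to check that the two assignments are mutually inverse.
\begin{itemize}
\item[-] $(\rho_{G(Q)})_Q=\rho_{G(Q)}\circ\varphi_Q=\rho$, by the defining property of $\rho_{G(Q)}$.
\item[-] $(\pi_Q)_{G(Q)}=\pi$, because $\pi$ itself is a group morphism satisfying $\pi\circ\varphi_Q=\pi_Q$, and the uniqueness clause of the universal property forces $(\pi_Q)_{G(Q)}=\pi$.
\end{itemize}
This gives the bijection.

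For 2), the key fact is that $\varphi_Q(Q)$ generates $G(Q)$ as a group; this holds by the presentation of $G(Q)$. Let $W\subset V$ be stable under $\rho(Q)=\rho_{G(Q)}(\varphi_Q(Q))$. Since $V$ is finite dimensional in the cases of interest, each $\rho(x)$ restricts to an injective, hence bijective, endomorphism of $W$. Therefore $W$ is also stable under each $\rho(x)^{-1}$. Every element of $G(Q)$ is a finite word in the generators $\varphi_Q(x)$ and their inverses, so $W$ is stable under all of $\rho_{G(Q)}(G(Q))$. Conversely, if $W$ is stable under $\rho_{G(Q)}(G(Q))$, it is stable under the subset $\rho_{G(Q)}(\varphi_Q(Q))=\rho(Q)$.

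The only delicate point is stability under inverses when $V$ is infinite dimensional. In that case I would use the quandle axiom instead of dimension. The map $L_x$ is bijective on $Q$, but that alone does not give $W$-stability under $\rho(x)^{-1}$. So for infinite-dimensional $V$ I would either restrict to finite dimension, which is the setting of the paper, or define subrepresentations of $G(Q)$ as subspaces stable under the generators and their inverses. In finite dimension the injectivity argument above settles the point, and that is the version I would present.
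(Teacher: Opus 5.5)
Your argument is correct. The paper states this proposition without proof, and yours is the standard argument it leaves implicit. You fix the typo to $\rho=\rho_{G(Q)}\circ\varphi_Q$, get the bijection from the universal property plus uniqueness, and use the fact that $\varphi_Q(Q)$ generates $G(Q)$. Finite dimensionality then turns $\rho(x)W\subset W$ into $\rho(x)W=W$, which gives stability under the inverses.

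One point to tighten. In 2) the finite-dimensionality hypothesis is not merely convenient: the claim is false without it, even for finite $Q$. Take the one-element quandle $Q=\{x\}$, so $G(Q)\cong\mathbb{Z}$, and $V=\mathbb{C}[t,t^{-1}]$ with $\rho(x)$ given by multiplication by $t$. Then $W=\mathbb{C}[t]$ is stable under $\rho(Q)$ but not under $\rho_{G(Q)}(x^{-1})$. So you should drop the sentence about using the quandle axiom instead, which leads nowhere. State finite dimensionality explicitly as a hypothesis of 2), which is also the setting in which the paper applies the result.
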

\begin{corollary}\label{corred}
The quandle representation $\rho :Q\to GL(V)$ is completely reducible if and only if the corresponding group representation $\rho_{G(Q)} :G(Q) \to GL(V)$ is completely reducible.
\end{corollary}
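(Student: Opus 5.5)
The corollary follows from the Proposition just stated; the only thing to check is that the two notions of ``irreducible subrepresentation'' and of ``direct sum decomposition'' coincide. Both notions only involve subspaces of the same vector space $V$. By part 2) of the Proposition, a subspace $W\subset V$ is stable under $\rho(Q)$ if and only if it is stable under $\rho_{G(Q)}(G(Q))$. Directly: $\rho_{G(Q)}(G(Q))$ is the subgroup of $GL(V)$ generated by $\rho(Q)=\rho_{G(Q)}(\varphi_Q(Q))$, since $\varphi_Q(Q)$ generates $G(Q)$. A subspace stable under a set of invertible operators on a finite dimensional space is stable under their inverses, hence under the generated group. Conversely, stability under the group implies stability under the generators.

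The first step is to deduce that the lattices of subrepresentations of $\rho$ and of $\rho_{G(Q)}$ are literally the same set of subspaces of $V$. In particular, for any subrepresentation $W$, the subrepresentations of $W$ for $Q$ and for $G(Q)$ coincide, so $W$ is irreducible as a quandle subrepresentation if and only if it is irreducible as a group subrepresentation.

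The second step handles the decomposition itself. Suppose $\rho$ is completely reducible, say $V=V_1\oplus\cdots\oplus V_k$ with each $V_i$ an irreducible quandle subrepresentation. Each $V_i$ is then an irreducible $G(Q)$-subrepresentation, so the same decomposition shows that $\rho_{G(Q)}$ is completely reducible. The converse is the identical argument read backwards.

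There is no real obstacle. The only point requiring care is the stability under inverses, used when passing from the generators $\rho(Q)$ to the whole group. Should $V$ be infinite dimensional, I would instead use that $\rho(x)^{-1}=\rho(y)$ with $L_x(y)=\ldots$? That is not clean, so I would simply rely on part 2) of the Proposition, which already asserts the equality of stable subspaces in general.
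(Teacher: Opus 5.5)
Your argument is correct and follows the paper's route: the paper gives no separate proof and treats the corollary as immediate from part 2) of the preceding Proposition (the two representations have the same subrepresentations, hence the same irreducible ones and the same direct sum decompositions), and your inverse-stability argument is exactly what justifies part 2) for finite dimensional $V$. The fallback in your last paragraph is not sound, however: with stability meaning $\rho(x)(W)\subseteq W$, part 2) really does need finite dimension (for the one-element quandle, $\rho(x)$ acting by multiplication by $t$ on $\mathbb{C}[t,t^{-1}]$ preserves $\mathbb{C}[t]$ while $\rho(x)^{-1}$ does not), so you should simply assume $V$ finite dimensional, which is the only setting in which the paper uses the corollary.
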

\begin{corollary}
$\rho$ is irreducible if and only if $\rho_{G(Q)}$ is.
\end{corollary}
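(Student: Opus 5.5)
This follows directly from part 2) of the preceding Proposition, which says that $\rho$ and $\rho_{G(Q)}$ have the same subrepresentations. The plan is to translate the definition of irreducibility on each side into a statement about this common collection of subspaces of $V$.

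\textbf{Step 1.} Recall what part 2) gives us. A subspace $W\subset V$ is stable under $\rho(Q)$ if and only if it is stable under $\rho_{G(Q)}(G(Q))$. For completeness, here is why. The group $G(Q)$ is generated by $\varphi_Q(Q)$, and $\rho=\rho_{G(Q)}\circ\varphi_Q$. So the image $\rho_{G(Q)}(G(Q))$ is the subgroup of $GL(V)$ generated by $\rho(Q)$. A subspace stable under finitely many invertible maps $\rho(x_i)$ is also stable under their inverses, since $V$ is finite dimensional and each $\rho(x_i)$ restricts to an injective, hence bijective, endomorphism of $W$. In general one can also argue directly: each $\rho(x)|_W$ is invertible on $W$ because its inverse is given by $\rho(z)$ for a suitable element $z$ coming from the quandle axioms. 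Either way, $W$ is then stable under all products of the $\rho(x)^{\pm1}$. The converse is immediate, since $\rho(Q)\subset\rho_{G(Q)}(G(Q))$.

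\textbf{Step 2.} Conclude. The irreducibility of $\rho$ means that the only $\rho(Q)$-stable subspaces are $0$ and $V$. The irreducibility of $\rho_{G(Q)}$ means the same thing for $G(Q)$-stable subspaces. By Step 1 these two families of subspaces coincide, so the two conditions are equivalent.

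The only delicate point is the stability under inverses in Step 1. I would simply invoke part 2) of the Proposition as already established, so I expect no real obstacle.
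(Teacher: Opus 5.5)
Your deduction matches the paper's: the corollary is recorded there as an immediate consequence of part 2) of the preceding Proposition, and your Step 2 is exactly that deduction. Your finite-dimensional justification of part 2) in Step 1 is also correct.

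The ``in general'' alternative in Step 1 is wrong, however, and should be dropped. The second quandle axiom says that $L_x$ is a bijection of $Q$. This gives $\rho(x)^{-1}\rho(y)\rho(x)=\rho(L_x^{-1}(y))$, i.e.\ $\rho(Q)$ is stable under conjugation by $\rho(x)^{-1}$. It does not give $\rho(x)^{-1}\in\rho(Q)$: take the one-point quandle with $\rho(x)=2$ on $\mathbb{C}$.

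Stability under inverses genuinely fails once $\dim V=\infty$. Let the one-point quandle act on $\mathbb{C}[t,t^{-1}]$ by multiplication by $t$. The subspace $\mathbb{C}[t]$ is then $\rho(Q)$-stable but not $\rho_{G(Q)}(G(Q))$-stable.

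Without finiteness assumptions even the corollary itself fails. Let $Q=\{s-a : a\in\mathbb{C}\}$ be a trivial quandle ($x\triangleright y=y$) acting on $\mathbb{C}(s)$ by multiplication. The $\mathbb{C}$-span of the group generated by the $s-a$ is all of $\mathbb{C}(s)$ (factor numerators and denominators into linear factors), so $\rho_{G(Q)}$ is irreducible. Yet $\mathbb{C}[s]$ is a nonzero proper $\rho(Q)$-stable subspace.

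So your proof is correct precisely in the finite-dimensional setting in which the paper uses the statement, and Step 1 should rest on that hypothesis alone.
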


\begin{proposition} Let $Q$ be a finite quandle and denote by $Q/Inn(Q)$ the set of orbits of $Q$ with respect to the action of $Inn(Q)$.
\item[1)] The abelianisation of the enveloping group $G(Q)$ is isomorphic to $\mathbb{Z} Q/Inn(Q)$, the free abelian group on $Q/Inn(Q)$.
\item[2)] The abelianisation map with the idetification in $1)$ $Ab :G(Q)\to \mathbb{Z} Q/Inn(Q)$ maps a generator to its orbit in $Q/Inn(Q)$.
\item[3)] The derived group of $G(Q)$ is finite. 
\end{proposition}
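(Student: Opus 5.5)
For 1) and 2), I will define the map $\psi : G(Q)\to \mathbb{Z}Q/Inn(Q)$ on generators by sending $x$ to its orbit $[x]$. The relation $xyx^{-1}=x\triangleright y$ maps to $[x]+[y]-[x]=[x\triangleright y]$. This holds because $x\triangleright y=L_x(y)$ lies in the orbit of $y$. So $\psi$ is a well-defined group morphism. It is surjective, and since the target is abelian it factors through $G(Q)^{ab}$.

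For the inverse, I note that in $G(Q)^{ab}$ the relations become $y = x\triangleright y$. So two generators in the same $Inn(Q)$-orbit become equal, since the orbit is generated by left translations and the inverses of left translations. Hence $G(Q)^{ab}$ is generated by the classes of the orbits, and sending $[x]$ to the class of $x$ defines an inverse morphism from the free abelian group. This gives 1), and 2) holds by construction.

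For 3), I will use the Inn-action to bound the derived group $G(Q)'$. Let $c$ be the number of orbits and choose a representative $q_k$ of each orbit. Every generator $x$ in orbit $k$ has the form $g q_k g^{-1}$ for some $g\in G(Q)$. Indeed, $L_y$ corresponds to conjugation by $y$ in $G(Q)$, so each $x$ in the orbit of $q_k$ is a conjugate of $q_k$. Hence $x q_k^{-1}=[g,q_k]\in G(Q)'$.

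The key facts I need are these:
\begin{itemize}
\item[(a)] The image of the conjugation action $G(Q)\to Aut$ restricted to generators factors through $Inn(Q)$, which is finite. So the kernel $K$ of $G(Q)\to Inn(Q)$ is central, since it fixes all generators under conjugation.
\item[(b)] $G(Q)/K\cong Inn(Q)$ is finite.
\end{itemize}
Then $G(Q)$ is central-by-finite, so by Schur's theorem $G(Q)'$ is finite. This is the step I expect to carry the main weight: verifying that $g\mapsto$ (action on generators) is a homomorphism onto $Inn(Q)$ sending $x$ to $L_x$, and that its kernel is central. The kernel is central because an element acting trivially on all generators commutes with every generator.

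If I want to avoid Schur's theorem, a direct argument runs as follows. $G(Q)'$ is generated by the elements $xq_k^{-1}$ and their conjugates. Central elements contribute trivially to commutators, so $G(Q)'$ is the image of the commutator map on the finite quotient $Inn(Q)\times Inn(Q)$ together with the products they generate. It is finitely generated and, by the transfer argument, torsion. I would, however, simply invoke Schur.
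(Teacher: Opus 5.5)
Your proposal is correct and follows the paper's approach: generators and relations for 1) and 2), and for 3) the fact that $G(Q)$ is a central extension of the finite group $Inn(Q)$, followed by Schur's theorem. The only difference is that you prove $\ker(G(Q)\to Inn(Q))$ is central yourself, via $g\varphi_Q(y)g^{-1}=\varphi_Q(\pi(g)(y))$ for the morphism $\pi$ with $\pi(x)=L_x$, whereas the paper quotes this fact from the literature.
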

\begin{proof}
$1)$ and $2)$ are well established results. For instance one can prove them using generators and relations. Now, the group $G(Q)$ is a central extension of $Inn(Q)$ (\cite{EM}). Since $Q$ is finite $Inn(Q)$ is finite and the center of $G(Q)$ has finite index. $3)$ is hence an application of Schur's theorem for the derived group.
\end{proof}

\begin{proposition}\label{Z0} For $Q$ a finite quandle let $Z_0$ be the subgroup of $G(Q)$ generated by the elements $x^n$ for $x\in Q$ and with $n=\vert Inn(Q) \vert $.
\item[1)] The group $Z_0$ lies in the center of $G(Q)$
\item[2)] $Z_0$ has finite index in $G(Q)$.
\end{proposition}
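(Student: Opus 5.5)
Both parts will follow from the natural group morphism $\pi : G(Q) \to Inn(Q)$ sending a generator $x$ to the left translation $L_x$, together with Proposition on the abelianisation and the finiteness of the derived group. This morphism exists by the universal property of $\varphi_Q$, since $x\mapsto L_x$ is a quandle morphism into $Conj(Inn(Q))$. It is surjective because $Inn(Q)$ is generated by left translations.

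\textbf{Part 1).} The key observation is that conjugation in $G(Q)$ by $g$ acts on generators through $\pi(g)$: for $y\in Q$ we have $g y g^{-1} = \pi(g)(y)$. This holds for $g$ a generator by the defining relation $xyx^{-1}=x\triangleright y$, and extends to all $g$ by induction on word length. For $x^{-1}$ one uses that $x^{-1}yx$ is the unique $z$ with $x\triangleright z=y$, i.e. $L_x^{-1}(y)$. Now take $x\in Q$ and $n=|Inn(Q)|$. By Lagrange $\pi(x)^n=L_x^n=\mathrm{id}$, so $x^n y x^{-n}=L_x^n(y)=y$ for every generator $y$. Hence $x^n$ is central, and so $Z_0$, being generated by central elements, lies in the center.

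\textbf{Part 2).} Here I would show that $G(Q)/Z_0$ is finite. Let $\bar G=G(Q)/Z_0$. It is generated by the finitely many images $\bar x$, $x\in Q$, each of order dividing $n$. Its derived subgroup is the image of $[G(Q),G(Q)]$, which is finite by the earlier Proposition (part 3)). The abelianisation of $\bar G$ is a quotient of $\mathbb{Z}Q/Inn(Q)$ in which each generator has order dividing $n$, so it is finite of order at most $n^{|Q/Inn(Q)|}$. Thus $\bar G$ is an extension of a finite abelian group by a finite group, hence finite, and $Z_0$ has finite index.

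The main obstacle is only bookkeeping. One has to justify carefully that conjugation by inverses also acts through $\pi$, using the uniqueness axiom of quandles. One also has to check that the image of the derived group in the quotient is the derived group of the quotient, which holds since commutator subgroups map onto commutator subgroups under surjections.
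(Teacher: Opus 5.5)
Your proposal is correct and follows essentially the paper's route. For part 1, the paper uses the same fact that conjugation by a generator $x$ realises $L_x$ on the generators; it phrases this as $G(L_x)=c_x$ plus functoriality of $Q\mapsto G(Q)$, rather than via your surjection $\pi:G(Q)\to Inn(Q)$ and induction on word length. For part 2, it likewise combines finiteness of the derived group with finiteness of the abelianisation of $G(Q)/Z_0$, and your version just spells out the details (the inverse case, and passing to the quotient) that the paper calls ``readily checked''.
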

\begin{proof}
For $x$ in $Q$, we have denoted, in the first section, by $L_x$ the left translation by $x$ (bijection of $Q$ mapping $y$ to $x\triangleright y$). From what we have seen in the first section, there exists a unique group morphism $G(L_x):G(Q)\to G(Q)$ such that $G(L_x)\circ \varphi_Q=\varphi_Q \circ L_x$. As one can check on the generators of $G(Q)$, the conjugacy $c_x$ in $G(Q)$ by the generator $x$ satisfies the last condition. Hence, $G(L_x)=c_x$. Since the assignement is functorial, $G(L_x^n)=G(id_Q)=c_x^n=id_{G(Q)}$ where $n$ is as in the proposition. This proves that for $x\in Q$, $x^n$ lies in the center of $G(Q)$. We have proved $1)$. It can be readly checked from $1)$ and $2)$ of the previous propostion that the image of $Z_0$ in the abelianisation of $G(Q)$ is of finite index. By $3)$ of the previous proposition the derived group of $G(Q)$ is finite and therefore $Z_0$ has finite index in $G(Q)$.
\end{proof}

\begin{lemma}\label{power}
An endomorphism $f$ of a finite dimensional vector space over $\mathbb{C}$ is diagonalizable if and only if $f^n$ is diagonalizable for some $n>0$.
\end{lemma}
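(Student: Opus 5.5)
The direction ``$f$ diagonalizable $\Rightarrow$ $f^n$ diagonalizable'' is immediate: take $n=1$. More generally, if $f=PDP^{-1}$ with $D$ diagonal, then $f^n=PD^nP^{-1}$. The content of the lemma is the converse, and I will use the criterion that an endomorphism of a finite dimensional complex vector space is diagonalizable if and only if it is annihilated by a polynomial with simple roots, i.e.\ its minimal polynomial is split with simple roots.

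The converse requires $f$ to be invertible, i.e.\ $f\in GL(V)$. This is the situation in which the lemma will be applied, since $f$ will lie in $\rho(Q)$ or $\rho_{G(Q)}(G(Q))$. Without this hypothesis the statement fails: a nonzero nilpotent $f$ is not diagonalizable, yet $f^n=0$ is. I will therefore state and use the lemma for $f$ invertible. Then all eigenvalues of $f^n$ are nonzero.

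Assume $f^n$ is diagonalizable and let $\lambda_1,\dots,\lambda_k$ be its distinct eigenvalues. Its minimal polynomial is $\prod_{i=1}^k (X-\lambda_i)$, so $f$ is annihilated by
$$P(X)=\prod_{i=1}^k (X^n-\lambda_i).$$
I claim $P$ has simple roots. For each $i$, $X^n-\lambda_i$ has $n$ distinct roots, since its derivative $nX^{n-1}$ vanishes only at $0$ and $\lambda_i\neq 0$. For $i\neq j$, the factors $X^n-\lambda_i$ and $X^n-\lambda_j$ have no common root $z$, because $z^n=\lambda_i\neq\lambda_j=z^n$ is impossible. Hence $P$ is a product of distinct linear factors over $\mathbb{C}$. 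The minimal polynomial of $f$ divides $P$, so it also has simple roots, and $f$ is diagonalizable.

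The only delicate point is the invertibility caveat; once eigenvalues are nonzero the separability of $P$ is a direct check.
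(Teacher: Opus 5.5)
Your proof is correct and follows the paper's route: annihilate $f$ by $P(X^n)$, where $P$ is the minimal polynomial of $f^n$, and check that this polynomial has simple roots. Your invertibility caveat is a genuine correction. The paper asserts without qualification that $P(X^n)$ has simple roots, which fails when $0$ is a root of $P$ and $n>1$, so the lemma as stated is false for a nonzero nilpotent $f$. Every application in the paper is to elements of $GL(V)$, so your restricted version is all that is needed.
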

\begin{proof}
If $f$ is diagonalizable then $f^n$ is diagonalizable. If $f^n$ is diagonalizable the minimal polynomial $P(X)$ of $f^n$ has simple roots and so does the polynomial $R(X)=P(X^n)$. But, $R(f)=P(f^n)=0$ and $R$ has simple roots, hence $f$ is diagonalizable. This proves that if $f^n$ diagonalizable then $f$ is.
\end{proof}

\begin{proposition}\label{dec}
Let $1\to Z \to G \to H\to 1$ be a central extension of groups with $H$ finite and $\rho :G\to GL(V)$ be a finite dimensional complex group representation.
\item[1)] The elements of $\rho(Z)$ are all diagonalizable if and only if all elements of $\rho(G)$ are diagonalizable.
\item[2)] The representation $\rho$ is completely reducible if and only if every element of $\rho(G)$ $($equivalently $\rho(Z)$$)$ is diagonalizable.
\end{proposition}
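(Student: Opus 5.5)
For 1), one direction is immediate, since $\rho(Z)\subset\rho(G)$. For the converse, let $n=|H|$. For any $g\in G$ the image of $g$ in $H$ has order dividing $n$, so $g^n\in Z$. Hence $\rho(g)^n=\rho(g^n)$ is diagonalizable, and Lemma \ref{power} shows that $\rho(g)$ is diagonalizable.

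For 2), I first assume $\rho$ is completely reducible, $V=V_1\oplus\cdots\oplus V_k$ with each $V_i$ irreducible. Each $z\in Z$ is central, so $\rho(z)$ commutes with $\rho(G)$. Its restriction to $V_i$ is then a $G$-intertwiner of an irreducible finite dimensional complex representation, and by Schur's lemma it is a scalar. Therefore $\rho(z)$ is diagonal in any basis adapted to the decomposition, and by 1) every element of $\rho(G)$ is diagonalizable.

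Conversely, I assume every $\rho(z)$, $z\in Z$, is diagonalizable. The operators $\rho(z)$ commute pairwise (they are images of central elements), so they are simultaneously diagonalizable. This gives a decomposition $V=\bigoplus_{\lambda}V_\lambda$ into joint eigenspaces, indexed by characters $\lambda:Z\to\mathbb{C}^\times$. Since $\rho(z)$ commutes with $\rho(G)$, each $V_\lambda$ is $G$-stable, so it suffices to show that each $V_\lambda$ is completely reducible. On $V_\lambda$, $Z$ acts by the scalars $\lambda(z)$, and I aim to build a $G$-invariant inner product on $V_\lambda$. Fix a set-theoretic section $s:H\to G$ and any inner product $\langle\cdot,\cdot\rangle_0$ on $V_\lambda$, and define
$$\langle v,w\rangle=\sum_{h\in H}\langle \rho(s(h))v,\rho(s(h))w\rangle_0 .$$
The main obstacle is that $\lambda(z)$ need not have modulus $1$, so the scalars do not cancel automatically. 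To handle this, I replace $\rho$ on $V_\lambda$ by a twist that makes $Z$ act unitarily. Since $|\lambda|:Z\to\mathbb{R}_{>0}$ is a homomorphism into a uniquely divisible group and $Z$ has finite index in $G$, I will try to extend $|\lambda|$ to a homomorphism $\mu:G\to\mathbb{R}_{>0}$. The candidate is $\mu(g)=|\lambda(g^n)|^{1/n}$, using that $g^n\in Z$. The point to check is that $\mu$ is multiplicative. I expect to deduce this from the fact that the transfer $G\to Z$ restricts to $z\mapsto z^n$ on the central subgroup $Z$. Alternatively, I can note that $|\det\rho_\lambda(g)|^{1/\dim V_\lambda}$ is a homomorphism $G\to\mathbb{R}_{>0}$ which restricts to $|\lambda|$ on $Z$, and take this as $\mu$ directly; I would use this second route, since it avoids the transfer. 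Then $\rho'(g)=\mu(g)^{-1}\rho(g)$ is a representation of $G$ with the same subrepresentations as $\rho$ on $V_\lambda$, and $Z$ acts on it by scalars of modulus $1$.

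For $\rho'$, each $g\in G$ can be written as $g=s(h)z$ with $z\in Z$, and right multiplication by $g$ permutes $H$. It follows that the averaged form above, built with $\rho'$, is $\rho'(G)$-invariant, because the scalars of modulus $1$ cancel in the Hermitian form. So $\rho'$ on $V_\lambda$ is unitary, hence completely reducible by the theorem on unitary representations. Since $\rho$ and $\rho'$ have the same invariant subspaces, $V_\lambda$ is completely reducible for $\rho$, and therefore so is $V$.
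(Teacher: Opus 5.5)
Your proof is correct. Part 1), the Schur's lemma direction of 2), and the reduction to the $G$-stable joint eigenspaces $V_\lambda$ of $\rho(Z)$ match the paper. The two arguments differ only in how you show that each $V_\lambda$ is completely reducible.

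The paper uses the fact that $Z$ acts on $V_\lambda$ by scalars. Hence the image of $G$ in $PGL(V_\lambda)$ factors through $H$ and is finite. Its preimage $G_\lambda$ under the surjection $SL(V_\lambda)\to PGL(V_\lambda)$, which has finite kernel, is therefore a finite group. Maschke's theorem then decomposes $V_\lambda$ under $G_\lambda$. Since $\rho(G)$ and $G_\lambda$ agree up to scalars, they have the same invariant subspaces.

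You instead twist by the positive real character $\mu=|\det\rho_\lambda|^{1/\dim V_\lambda}$, which makes $Z$ act by unimodular scalars, and then average an inner product over a section of $H$. This is the technique the paper uses later for irreducible unitary representations. Here it is applied on a block where $Z$ already acts by scalars, so irreducibility is not needed.

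Using $|\det|$ rather than $\det$ is the right choice. The positive real root is canonical and multiplicative, whereas a complex root of $\det$ is not. The paper absorbs that ambiguity by passing to the finite group $G_\lambda\subset SL(V_\lambda)$.

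What each route buys:
\begin{itemize}
\item Yours gives a slightly more concrete conclusion: each $V_\lambda$ becomes unitary after twisting by a character $G\to\mathbb{R}_{>0}$.
\item The paper's uses only Maschke's theorem and algebraic closedness. It therefore works verbatim over any algebraically closed field of characteristic $0$, where your Hermitian-form argument is not available.
\end{itemize}
Your alternative route would also work: for central $Z$ of index $|H|$, the transfer shows that $g\mapsto g^{|H|}$ is a homomorphism $G\to Z$.
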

\begin{proof}
We prove $1)$. For $g\in G$, the element  $g^{\vert H \vert }$ lies in $Z$ and hence by the previous lemma $\rho(g)^{\vert H \vert }$ is diagonalizable if and only if $\rho(g)$ is. $1)$ follows.
We now prove $2)$. If $\rho$ is decomposable into sum of irreducibles $\bigoplus_i V_i$ then by Schur's lemma $Z$ acts by multiplication by scalar in each $V_i$ and hence any element of $\rho(Z)$ is diagonalizable. We have proved that if $\rho$ is decomposable into sum of irreducibles then $\rho(z)$ is diagonalizable for every $z\in Z$. We now prove the converse. Assume that all the elments $\rho(z)$ for $z\in Z$ are diagonalizable. This means that the elements of $\rho(Z)$ can be simultaneously diagonalized since they commute. Hence, there is a basis $(v_1,\dots,v_n)$ of $V$ such that for all $z\in Z$ : 
$$ \rho(z) v_i =\chi_i(z) v_i,$$ 
with $\chi_i :Z \to \mathbb{C}^\times$ a multiplicative character. Let $X$ be the set formed by the characters $\chi_1,\cdots,\chi_n$. For $\chi \in X$, let $V_\chi$ be the subspace of $V$ spanned by the vector $v_i$ such that $\chi_i=\chi$. The space $V_\chi$ is stable under the action of $G$ and :
$$ V= \bigoplus_{\chi\in X} V_{\chi}.$$
For $\chi \in X$, denote by $\rho_\chi : G\to GL(V_\chi)$ the representation obtained from $\rho$ by restriction to $V_\chi$ and by $\varphi_\chi$ the projection $GL(V_\chi)\to PGL(V_\chi)$. The subgroup $Z$ of $G$ acts in $V_X$ by multiplication by scalars and $G$ is the central extension of the finite group $H$ by $Z$. Hence, the image of $\varphi_\chi \circ \rho_\chi$ is finite. Since we are working over $\mathbb{C}$, the projection $\theta_\chi : SL(V_\chi)\to PGL(V_\chi)$ ($SL$ for endomorphisms of determinant $1$) is surjective and has finite kernel. Hence, the preimage $G_\chi$ of $\varphi_\chi \circ \rho_\chi(G)$ under $\theta_\chi$ is a finite group. Since $G_\chi$ is finite, $V_\chi$ decomposes as sum of irreducibles under the natrual action of $G_\chi \subset SL(V_\chi)$ : 
$$V_\chi = \bigoplus_i W_i$$
 Now any element of $\rho_\chi(G)$ is obtained from an element of $G_\chi$ by a multiplication by a scalar and vice versa. Hence, any stable subspace of $V_\chi$ under $G_\chi$ is stable under $G$ and vice versa. Therefore, the decomposition of $V_\chi= \bigoplus_i W_i$ abtained above is a decomposition of $V_\chi$ into irreducibles with respect to the representation $\rho_\chi :G \to GL(V_\chi)$. This proves that $V$ decomposes into a direct sum of irreducible representations if $\rho(z)$ is diagonalizable for all $z\in Z$. We have completed the proof of $2)$ of the proposition.
\end{proof}

\begin{theorem}
Let $Q$ be a finite quandle and $\rho :Q \to GL(V)$ be a finite dimensional quandle representation over $\mathbb{C}$. The quandle representation $\rho$ is completely reducible if and only if all the elements of $\rho(Q)$ are diagonalizable.
\end{theorem}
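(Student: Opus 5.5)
The plan is to reduce the statement to Proposition \ref{dec}, applied to the enveloping group, through Corollary \ref{corred}. By Corollary \ref{corred}, $\rho$ is completely reducible if and only if the group representation $\rho_{G(Q)}: G(Q)\to GL(V)$ is completely reducible. So it suffices to exhibit a central extension $1\to Z\to G(Q)\to H\to 1$ with $H$ finite, and then to relate diagonalizability of the elements of $\rho_{G(Q)}(G(Q))$ to diagonalizability of the elements of $\rho(Q)$.

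For the central extension I take $Z=Z_0$, the subgroup generated by the $x^n$ with $x\in Q$ and $n=\vert Inn(Q)\vert$. By Proposition \ref{Z0}, $Z_0$ is central and of finite index, so $H=G(Q)/Z_0$ is finite. Proposition \ref{dec} 2) then gives: $\rho_{G(Q)}$ is completely reducible if and only if every element of $\rho_{G(Q)}(Z_0)$ is diagonalizable.

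The direction ``completely reducible $\Rightarrow$ every $\rho(x)$ diagonalizable'' is now immediate. Proposition \ref{dec} 1) shows that all elements of $\rho_{G(Q)}(G(Q))$ are diagonalizable, and $\rho(Q)\subset\rho_{G(Q)}(G(Q))$.

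The other direction carries the only real content. Assume each $\rho(x)$ with $x\in Q$ is diagonalizable.
\begin{itemize}
\item By Lemma \ref{power} (or directly), each $\rho(x)^n=\rho_{G(Q)}(x^n)$ is diagonalizable.
\item Since the $x^n$ are central in $G(Q)$, the operators $\rho_{G(Q)}(x^n)$ commute pairwise.
\item A family of pairwise commuting diagonalizable operators is simultaneously diagonalizable.
\item Every element of $Z_0$ is a product of the $x^{\pm n}$, so its image is diagonal in that common basis.
\end{itemize}
Hence every element of $\rho_{G(Q)}(Z_0)$ is diagonalizable, and Proposition \ref{dec} 2) together with Corollary \ref{corred} give complete reducibility of $\rho$.

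The main point to watch is this passage from the generators of $Z_0$ to all of $Z_0$. The argument uses that $Z_0$ is generated by powers of quandle elements, which are central; without centrality, products of diagonalizable matrices need not be diagonalizable.
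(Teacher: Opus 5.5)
Your proof is correct and follows essentially the same route as the paper. You reduce to $\rho_{G(Q)}$ via Corollary~\ref{corred}, apply Proposition~\ref{dec} to the central extension by $Z_0$ from Proposition~\ref{Z0}, and pass between the $\rho(x)$ and the images $\rho(x)^n$ of the generators of $Z_0$; your simultaneous-diagonalization step just spells out the paper's remark that, since $Z_0$ is abelian, diagonalizability of the images of its generators suffices.
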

\begin{proof}
By corollary \ref{corred}, $\rho$ is completely reducible if and only if the correspondant group representation  $\rho_{G(Q)}:G(Q)\to GL(V)$ is. From proposition \ref{Z0}, the subgroup $Z_0$ of $G(Q)$ generated by $x^n$ for $x\in Q$ and $n=\vert Inn(Q)\vert$ lies in the center of $G(Q)$ and has finite index. Applying $2)$ of the previous proposition to the central exact sequence $1\to Z_0\to G(Q) \to G(Q)/Z_0 \to 1$, we get that $\rho_{G(Q)}$ is completely reducible if and only if the elements of $\rho_{G(Q)}(Z_0)$ are diagonalizable. But $Z_0$ is abelian and the diagonalizability of the image of its elements is equivalent to the diagonalizability of the image of its generators. Hence, $\rho_{G(Q)}$ is completely reducible if and only if the element $\rho_{G(Q)}(x)^n$ for $x\in Q$ with $n=\vert Inn(Q) \vert $ are diagonalizable. By lemma \ref{power}, $\rho_{G(Q)}(x)^n$ is diagonalizable if and only if $\rho_{G(Q)}(x)=\rho(x)$ is. This proves the theorem.
\end{proof}
\begin{corollary}
For $Q$ a finite quandle and $\rho : Q \to GL(V)$ a finite dimensional irreducible quandle representation over $\mathbb{C}$, the image of $\rho$ consists of diagonalizable automorphisms.
\end{corollary}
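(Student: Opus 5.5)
This corollary follows directly from the theorem just proved, used in the direction ``completely reducible $\Rightarrow$ diagonalizable''. The plan has three steps.

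First, I observe that an irreducible representation is trivially completely reducible. If $V\neq 0$, then $V$ is itself an irreducible subrepresentation, so $V=V$ is a decomposition into a direct sum with a single irreducible summand. The degenerate case $V=0$ is vacuous: the image consists only of the identity of the zero space, which is diagonalizable. This is in any case excluded by the usual convention that irreducible representations are nonzero.

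Second, I apply the preceding theorem to $\rho$. Its hypotheses hold: $Q$ is a finite quandle, $V$ is finite dimensional over $\mathbb{C}$, and $\rho$ is completely reducible by the first step. The theorem then gives that every element of $\rho(Q)$ is diagonalizable, which is the claim.

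As a sanity check on the mechanism, one can bypass the theorem. Pass to $\rho_{G(Q)}$, which is irreducible by the corollary following the correspondence proposition. By Proposition \ref{Z0}, each $x^n$ with $n=\vert Inn(Q)\vert$ is central in $G(Q)$. Schur's lemma therefore makes $\rho(x)^n=\rho_{G(Q)}(x^n)$ a scalar, hence diagonalizable, and Lemma \ref{power} then gives that $\rho(x)$ is diagonalizable.

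There is no real obstacle. The only point needing care is that Schur's lemma over $\mathbb{C}$ requires finite dimensionality, and that hypothesis is assumed.
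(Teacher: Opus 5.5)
Your proposal is correct and matches the paper, which states this corollary without a separate proof because it follows at once from the preceding theorem: an irreducible representation is trivially completely reducible. Your alternative check, that Schur's lemma makes $\rho(x)^n$ scalar for the central elements $x^n$ and Lemma~\ref{power} then gives diagonalizability of $\rho(x)$, is also sound; it is exactly the mechanism inside the theorem's proof.
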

\begin{remark}
The complete reduciblity of $\rho$ as in the theorem implies the diagonalizability of all the elements in the image of $\rho_{G(Q)}$ $($by following the proof and the previous proposition$)$.
\end{remark}
\begin{example}
If $Q$ is a finite quandle, $G$ is a finite group, $f:Q\to Conj(G)$ is a quandle morphism and $\rho :G\to GL(V)$ a finite dimensional group representation of the group $G$ over $\mathbb{C}$, the map $\rho \circ f$ is a quandle representation that is completely reducible. Indeed, since $G$ is finite, the image of $\rho$ consists of diagonalizable automorphisms.
\end{example}
We note that the last theorem could fail if the quandle is not finite. Indeed, let $G$ be the subgroup of $GL_2(\mathbb{C})$ generated by the matrices :
\[\begin{pmatrix}
1 & 1\\
0 & 1
\end{pmatrix}\quad \text{and} \quad \begin{pmatrix}
1 & 0\\
1& 1
\end{pmatrix}.\]
$G$ is infinite since the matrices don't have a finite order. $Conj(G)$ is a quandle and the natural inclusion of $Conj(G)=G$ into $GL_2(\mathbb{C})$ is an irreducible quandle representation. Indeed, the only nonzero stable subspace shared by the two matrices is $\mathbb{C}^2$. But the matrices given are not diagonalizable. 
\section{Irreducible unitary representations}
\begin{theorem}
Let $Q$ be a finite quandle and $\rho : Q \to GL(V)$ be an irreducible finite dimensional complex representation of $Q$. $\rho$ is unitary for some inner product if and only if the determinant of every element of $\rho(Q)$ has modulus $1$.
\end{theorem}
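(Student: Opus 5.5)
The easy direction comes first. If $\rho$ is unitary, every $\rho(x)$ is an isometry for the inner product. Its eigenvalues therefore have modulus $1$, and so $\vert\det\rho(x)\vert=1$.

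For the converse, I would pass to the group representation $\rho_{G(Q)}:G(Q)\to GL(V)$. By the corollary above, $\rho_{G(Q)}$ is irreducible, and $\rho$ is unitary for an inner product exactly when $\rho_{G(Q)}$ is, since $G(Q)$ is generated by $\varphi_Q(Q)$. The plan is to show that $\rho_{G(Q)}(G(Q))$ is a relatively compact subgroup of $GL(V)$, and then to average an inner product. For the compactness step I use Proposition \ref{Z0}. The subgroup $Z_0$ generated by the $x^n$, with $n=\vert Inn(Q)\vert$, is central of finite index. Since $\rho_{G(Q)}$ is irreducible, Schur's lemma shows that each $\rho(x)^n$ acts as a scalar $\lambda_x\,\mathrm{id}_V$. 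Taking determinants gives $\lambda_x^{\dim V}=\det\rho(x)^n$, which has modulus $1$ by hypothesis, so $\vert\lambda_x\vert=1$. Every element of $\rho_{G(Q)}(Z_0)$ is a product of such scalars and their inverses, so $\rho_{G(Q)}(Z_0)$ lies in the circle group $S^1\cdot\mathrm{id}_V$.

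Now choose coset representatives $g_1,\dots,g_k$ of $Z_0$ in $G(Q)$; there are finitely many because $Z_0$ has finite index. Every element of $\rho_{G(Q)}(G(Q))$ has the form $\rho_{G(Q)}(g_i)\,\mu$ with $\mu\in S^1$. Hence the image is contained in the compact set $K=\bigcup_i \rho_{G(Q)}(g_i)\,S^1$. Its closure $\overline{\rho_{G(Q)}(G(Q))}$ is then a compact subgroup of $GL(V)$.

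To finish, I would avoid Haar measure, since it is more elementary to average over a finite group. Let $H=G(Q)/Z_0$, which is finite. Start from any inner product $\langle\cdot,\cdot\rangle_0$ and define
$$\langle u,v\rangle=\sum_{i=1}^k \langle \rho_{G(Q)}(g_i)u,\rho_{G(Q)}(g_i)v\rangle_0 .$$
Take $g\in G(Q)$. For each $i$ we have $g_ig=z_i g_{\sigma(i)}$ for some $z_i\in Z_0$ and some permutation $\sigma$ of the indices. Since $\rho_{G(Q)}(z_i)$ is a scalar of modulus $1$, it does not change $\langle\cdot,\cdot\rangle_0$. Reindexing the sum by $\sigma$ then shows that $\langle\cdot,\cdot\rangle$ is $\rho_{G(Q)}(g)$-invariant. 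This gives the required unitary structure.

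The main obstacle is the step showing that the central scalars have modulus $1$. That step needs both the irreducibility of $\rho$, through Schur's lemma, and the determinant hypothesis. The rest is bookkeeping with coset representatives.
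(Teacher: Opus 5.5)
Your proof is correct and follows the same route as the paper: Schur's lemma makes the central finite-index subgroup $Z_0$ act by scalars, the determinant hypothesis forces those scalars to have modulus $1$, and an inner product averaged over finitely many coset representatives of $Z_0$ is then invariant. Two small remarks: the compactness paragraph is harmless but never used, since the finite averaging alone finishes the argument, and your explicit identity $\lambda_x^{\dim V}=\det\rho(x)^n$ spells out a step the paper leaves implicit.
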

\begin{proof}
If $\rho$ is unitary the elements of $\rho(Q)$ are isometries and hence have determinant with modulus $1$. We will prove the converse. Assume that the determinant of any element of $\rho(Q)$ has modulus $1$. Let $\rho_{G(Q)} : G(Q) \to GL(V)$ be the corresponding representation of $G(Q)$. Since the image of $\rho_{G(Q)}$ consist of products of elements of the image of $\rho$, the determinant of any element in the image of $\rho_{G(Q)}$ has modulus $1$. By proposition \ref{Z0}, we have a central extension $1 \to Z_0 \to G(Q) \to G(Q)/Z_0\to 1$ with $G(Q)/Z_0$ finite. Set $H=G(Q)/Z_0$. Let $s:H\to G(Q)$ be a section of $G(Q)\to H$ and $\langle-,-\rangle$ be an inner product of $V$. We define the inner product $\langle -,-\rangle_s$ on $V$ by :
$$ \langle v,w\rangle_s= \sum_{h\in H} \langle \rho_{G(Q)}(s(h))(v), \rho_{G(Q)}(s(h))(w)\rangle,$$
for $v,w\in V$. Take $g \in G$; $g$ can be written as $zs(h_1)$ for some $z\in Z_0$ and $h_1\in H$. We have that :
$$ \langle \rho_{G(Q)}(g)(v),\rho_{G(Q)}(g)(w)\rangle_s= \sum_{h\in H} \langle \rho_{G(Q)}(z_hs(hh_1))(v), \rho_{G(Q)}(z_hs(hh_1))(w)\rangle$$
where 
$$ z_h=zs(h)s(h_1)s(hh_1)^{-1},$$
lies in $Z_0$. But $Z_0$ lies in the center and the representation $\rho_{G(Q)}$ is irreducible with all elements in the image having a determinant of modulus $1$. Hence, $\rho_{G(Q)}(z_h)=\lambda_h id$ with $\lambda_h$ of modulus $1$. This proves that  :
\begin{equation*}
 \begin{split}
\langle \rho_{G(Q)}(g)(v),\rho_{G(Q)}(g)(w)\rangle_s  & = \sum_{h\in H} \langle \lambda_h\rho_{G(Q)}(s(hh_1))(v), \lambda_h\rho_{G(Q)}(s(hh_1))(w)\rangle\\
&=\sum_{h\in H}\lambda_h\overline{\lambda_h} \langle \rho_{G(Q)}(s(hh_1))(v), \lambda_h\rho_{G(Q)}(s(hh_1))(w)\rangle\\
&=\sum_{h\in H} \langle \rho_{G(Q)}(s(hh_1))(v), \rho_{G(Q)}(s(hh_1))(w)\rangle\\
&= \langle v,w\rangle_s
\end{split}
\end{equation*}
Therefore, the group representation $\rho_{G(Q)}$ is unitary with respect to $\langle -,-\rangle_s$ and so is $\rho$. We have proved the theorem.
\end{proof}
For a complex number $z=re^{i\theta}$ with $r>0$, $ 0\leq \theta < 2\pi$ and $n$ a positive natural number, we define $z^{\frac{1}{n}}$ by :
$$z^{\frac{1}{n}}=r^{\frac{1}{n}}e^{i\frac{\theta}{n}}.$$
\begin{proposition}
Let $\rho : Q \to GL(V)$ be a quandle representation with $V$ finite dimensional over $\mathbb{C}$. The map $\chi_\rho : Q \to \mathbb{C}^\times$ given by :
$$\chi_\rho(x)=\frac{1}{det(\rho(x))^{\frac{1}{dim(V)}}},$$
for $x\in Q$, is a quandle character.
\end{proposition}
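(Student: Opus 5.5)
The statement reduces to showing that the complex number $\det(\rho(x))$ depends only on the $Inn(Q)$-orbit of $x$. Indeed, $\chi_\rho(x)$ is built from $\det(\rho(x))$ by applying a fixed function: first $z\mapsto z^{\frac{1}{\dim(V)}}$ with the branch chosen in the paper, then inversion. So if $\det(\rho(x\triangleright y))=\det(\rho(y))$ for all $x,y\in Q$, then $\chi_\rho(x\triangleright y)=\chi_\rho(y)$. This is exactly the defining identity of a quandle character recalled in the first section.

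The key step is the invariance of the determinant under conjugation. Since $\rho$ is a quandle morphism into $Conj(GL(V))$, we have
$$\rho(x\triangleright y)=\rho(x)\rho(y)\rho(x)^{-1}.$$
Multiplicativity of the determinant then gives
$$\det(\rho(x\triangleright y))=\det(\rho(x))\det(\rho(y))\det(\rho(x))^{-1}=\det(\rho(y)).$$

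We also check that $\chi_\rho$ takes values in $\mathbb{C}^\times$. Since $\rho(y)\in GL(V)$, its determinant $\det(\rho(y))$ is nonzero. The chosen $n$-th root of a nonzero complex number $re^{i\theta}$ with $r>0$ is $r^{\frac{1}{n}}e^{i\frac{\theta}{n}}\neq 0$, so the reciprocal is defined and nonzero. This requires $\dim(V)\geq 1$, which we tacitly assume so that $\frac{1}{\dim(V)}$ makes sense.

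Nothing here is a genuine obstacle. The only point needing care is that the $n$-th root is not multiplicative, so one cannot argue through multiplicativity of $z\mapsto z^{\frac{1}{n}}$. The argument avoids this entirely by using only that the root is a well-defined function of $\det(\rho(y))$, and that this determinant is literally equal for $y$ and $x\triangleright y$. Equivalently, $\chi_\rho$ is constant on $Inn(Q)$-orbits, which matches the description of quandle characters given earlier.
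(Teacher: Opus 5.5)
Your proposal is correct and is exactly the check the paper leaves to the reader with ``this can be readily checked''. The check is that $\det(\rho(x\triangleright y))=\det(\rho(x)\rho(y)\rho(x)^{-1})=\det(\rho(y))$, and $\chi_\rho$ is a fixed function of $\det\rho$. You are also right to note that this avoids needing the chosen branch of $z\mapsto z^{\frac{1}{\dim(V)}}$ to be multiplicative, which it is not.
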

\begin{proof}
This can be readly checked
\end{proof}
\begin{proposition}
Let $\rho :Q \to GL(V)$ be an irreducible finite dimensional quandle representation over $\mathbb{C}$. The representation $\chi_\rho \cdot \rho : Q  \to GL(V)$ defined by :
$$(\chi_\rho \cdot \rho)(x)=\chi_\rho(x)  \rho(x),$$
for $x \in Q$, is an irreducible unitary representation with respect to some inner product.
\end{proposition}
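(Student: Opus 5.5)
Note that the statement is missing a hypothesis that the preceding theorem uses: $Q$ must be finite. The plan below assumes $Q$ is a finite quandle; without finiteness the claim fails, as the example of $Conj(G)$ with $G$ generated by two unipotent matrices shows.

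The plan is to reduce to the preceding theorem. That theorem says an irreducible finite dimensional complex representation of a finite quandle is unitary for some inner product exactly when every element of its image has determinant of modulus $1$. It therefore suffices to show that $\chi_\rho\cdot\rho$ is a quandle representation, that it is irreducible, and that $\det\bigl((\chi_\rho\cdot\rho)(x)\bigr)$ has modulus $1$ for every $x\in Q$.

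\emph{Step 1: $\chi_\rho\cdot\rho$ is a quandle representation.} By the previous proposition, $\chi_\rho$ is a quandle character, so $\chi_\rho(x\triangleright y)=\chi_\rho(y)$. Since scalars are central in $GL(V)$,
$$(\chi_\rho\cdot\rho)(x)\,(\chi_\rho\cdot\rho)(y)\,(\chi_\rho\cdot\rho)(x)^{-1}=\chi_\rho(y)\,\rho(x)\rho(y)\rho(x)^{-1}=\chi_\rho(x\triangleright y)\,\rho(x\triangleright y).$$
The right-hand side is exactly $(\chi_\rho\cdot\rho)(x\triangleright y)$.

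\emph{Step 2: irreducibility.} Each $(\chi_\rho\cdot\rho)(x)$ is a nonzero scalar multiple of $\rho(x)$. Hence a subspace is stable under all $(\chi_\rho\cdot\rho)(x)$ if and only if it is stable under all $\rho(x)$. So the subrepresentations of $\chi_\rho\cdot\rho$ and of $\rho$ coincide, and irreducibility of $\rho$ transfers.

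\emph{Step 3: determinants.} Put $d=\dim V$ and $D=\det(\rho(x))$. Then
$$\det\bigl(\chi_\rho(x)\rho(x)\bigr)=\chi_\rho(x)^d D=\frac{D}{(D^{1/d})^d}.$$
With the paper's convention $D=re^{i\theta}$ and $D^{1/d}=r^{1/d}e^{i\theta/d}$, we get $(D^{1/d})^d=re^{i\theta}=D$. So the determinant equals $1$, which in particular has modulus $1$. The preceding theorem then gives an inner product for which $\chi_\rho\cdot\rho$ is unitary.

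There is no real obstacle. The only point needing care is the well-definedness of $\chi_\rho$ as a quandle character, which is supplied by the previous proposition: since $\rho(x\triangleright y)$ is conjugate to $\rho(y)$, their determinants are equal, so the fixed choice of $d$-th root agrees on them.
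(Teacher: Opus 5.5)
Your proof is correct and follows the paper's route: check that the twist is again an irreducible quandle representation, observe that all its determinants equal $1$, and apply the preceding theorem. You are also right that finiteness of $Q$ is an unstated hypothesis here (the abstract assumes it), and your unipotent example shows it cannot be dropped, since there $\chi_\rho\equiv 1$ and the image contains non-diagonalizable elements.
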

\begin{proof}
It is not hard to check that $\chi_\rho\cdot \rho $ is an irreducible quandle representation. The fact that it is unitary follows from the previous theorem. Indeed, the determinant of the elements of the image of $\chi_\rho \cdot \rho$ is $1$.
\end{proof}
\section{Faithfull unitary representations of enveloping groups}
\begin{lemma}
A finitely generated abelian group admits a faithfull finite dimensional unitary representation over $\mathbb{C}$.
\end{lemma}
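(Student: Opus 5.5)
We will use the structure theorem for finitely generated abelian groups and then take a direct sum of faithful one-dimensional unitary characters, one for each cyclic factor. Write the finitely generated abelian group $A$ as
$$A\cong \mathbb{Z}^r\oplus \mathbb{Z}/d_1\mathbb{Z}\oplus\cdots\oplus\mathbb{Z}/d_k\mathbb{Z}.$$
We build a unitary character on each cyclic factor, injective on that factor, and assemble them into a diagonal representation on $\mathbb{C}^{r+k}$.

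For the finite cyclic factor $\mathbb{Z}/d_j\mathbb{Z}$, the character $\chi_j(m)=e^{2i\pi m/d_j}$ is well defined, takes values in the unit circle, and is injective on $\mathbb{Z}/d_j\mathbb{Z}$. For each copy of $\mathbb{Z}$ we choose an irrational real number $\alpha$ and set $\psi(m)=e^{2i\pi m\alpha}$. This has modulus $1$, and it is injective because $e^{2i\pi m\alpha}=1$ forces $m\alpha\in\mathbb{Z}$, hence $m=0$. In fact a single copy of $\mathbb{Z}$ could use the same $\alpha$ as the others, since we will put the characters on different coordinates.

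Define $\rho:A\to GL(\mathbb{C}^{r+k})$ by sending $(m_1,\dots,m_r,n_1,\dots,n_k)$ to the diagonal matrix
$$\mathrm{diag}\bigl(\psi(m_1),\dots,\psi(m_r),\chi_1(n_1),\dots,\chi_k(n_k)\bigr).$$
This is a group morphism because each diagonal entry is a morphism to $\mathbb{C}^\times$. It is unitary for the standard inner product on $\mathbb{C}^{r+k}$ because every diagonal entry has modulus $1$. If $\rho(a)=id$, then every coordinate character takes the value $1$. Injectivity of each coordinate then gives that every component of $a$ vanishes, so $\rho$ is faithful.

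There is no real obstacle here. The only points needing care are the appeal to the classification of finitely generated abelian groups and the irrationality argument for the free factors. The trivial group, where $r=k=0$, is handled by any representation, for instance the trivial one on $\mathbb{C}$.
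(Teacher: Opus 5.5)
Your proof is correct and follows the paper's route: decompose the group into cyclic factors, pick a faithful unitary character on each factor (your irrational rotation for the $\mathbb{Z}$ factors makes explicit what the paper leaves implicit), and take the diagonal direct sum. You read the paper's ``product representation'' correctly as a direct sum on $\mathbb{C}^{r+k}$; a tensor product of characters would be one-dimensional and in general not faithful.
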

\begin{proof}
A finitely generated abelian group is a product of cyclic groups. A cyclic group admits a faithfull $1$-dimensional unitary representation. One can construct a "product representation" from individual representations of the cyclic groups.
\end{proof}
\begin{theorem}
Let $Q$ be a finite quandle. The enveloping group $G(Q)$ of $Q$ admits a faithfull finite dimensional unitary representation over $\mathbb{C}$.
\end{theorem}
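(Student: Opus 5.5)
The plan is to combine the faithful unitary representation of the central subgroup $Z_0$ with an induced representation. By Proposition \ref{Z0}, the subgroup $Z_0$ generated by the elements $x^n$ (for $x\in Q$ and $n=\vert Inn(Q)\vert$) is central in $G(Q)$ and has finite index. Since $Q$ is finite, $Z_0$ is a finitely generated abelian group. The preceding lemma therefore gives a faithful finite dimensional unitary representation $\sigma : Z_0\to U(W)$.

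The first step is to form the induced representation $\pi=\mathrm{Ind}_{Z_0}^{G(Q)}\sigma$. Its underlying space is $\bigoplus_{h\in H} s(h)W$, where $H=G(Q)/Z_0$ and $s$ is a section as in the proof of the theorem of Section 3. This space has dimension $\vert H\vert \dim W<\infty$. Give it the orthogonal direct sum of copies of the inner product on $W$. Each $g\in G(Q)$ permutes the blocks, and on each block it acts by $\sigma(z)$ for some $z\in Z_0$. So $\pi(g)$ is a permutation of blocks composed with unitary maps, hence unitary. Equivalently, one can imitate the averaging argument of Section 3, which does not need irreducibility here because $\sigma$ is already unitary on $Z_0$.

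The second step is faithfulness, and this is the main obstacle: $\ker\pi$ need not be trivial. From the induced structure, $\ker\pi\cap Z_0=\ker\sigma=1$, since $z\in Z_0$ is central and acts on each block $s(h)W$ by $\sigma(z)$. Thus $\ker\pi$ injects into the finite group $H$, so $K=\ker\pi$ is finite. To kill $K$, I would add a second representation that is faithful on finite subgroups. I would use the regular representation of the finite quotient $G(Q)/Z_1$, where $Z_1\subseteq Z_0$ is a finite-index subgroup with $Z_1\cap K=1$ (for instance $Z_1=Z_0$ itself, since $K\cap Z_0=1$). The regular representation of the finite group $G(Q)/Z_0$ is unitary and finite dimensional, and its kernel is exactly $Z_0$.

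Finally, set $\Pi=\pi\oplus\lambda_{G(Q)/Z_0}$, which is unitary for the orthogonal sum inner product. Its kernel is $\ker\pi\cap Z_0=1$, so $\Pi$ is faithful. This in fact shows the direct sum alone suffices: $\ker\pi\subseteq\ker\pi$, $\ker\lambda=Z_0$, and the intersection is $\ker\sigma=1$. One can even replace $\pi$ by any unitary extension-free construction whose restriction to $Z_0$ contains $\sigma$. The key verification I would write carefully is that the restriction of $\pi$ to $Z_0$ is a direct sum of copies of $\sigma$, which uses the centrality of $Z_0$ from Proposition \ref{Z0}.
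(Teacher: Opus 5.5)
Your proof is correct and is essentially the paper's: you induce a faithful unitary representation $\sigma$ of the central finite-index subgroup $Z_0$ up to $G(Q)$. One correction: your ``main obstacle'' is not real, because any $g\notin Z_0$ has nontrivial image in $H$ and so sends every block $s(h)W$ to a different block, which gives $\ker\pi\subseteq Z_0$ and hence $\ker\pi=\ker\pi\cap Z_0=\ker\sigma=1$ (in general the kernel of an induced representation is the normal core of $\ker\sigma$), so $\pi$ is already faithful, as the paper asserts, and your extra summand $\lambda_{G(Q)/Z_0}$ is harmless but redundant.
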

\begin{proof}
By proposition \ref{Z0}, $Z_0$ is a finitely generated subgroup of the center of $G(Q)$ moreover $Z_0$ has finite index in $G(Q)$. by the previous lemma $Z_0$ admits a faithfull finite dimensional unitary representation. The induced representation to $G(Q)$ of such representation is also faithfull, unitary and finite dimensional.  
\end{proof}
\begin{corollary}
A finite quandle such that the universal map $\varphi_Q :Q\to G(Q)$ is injective admits a faithfull $($injective$)$ unitary finite dimensional quandle representation over $\mathbb{C}$.
\end{corollary}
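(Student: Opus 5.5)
The plan is to compose a faithful representation of $G(Q)$ with the universal map $\varphi_Q$. By the previous theorem, the enveloping group $G(Q)$ of the finite quandle $Q$ admits a faithful finite dimensional unitary group representation $\pi : G(Q)\to GL(V)$ over $\mathbb{C}$. Here $V$ carries an inner product for which every $\pi(g)$ is an isometry. I then set $\rho=\pi\circ\varphi_Q : Q\to GL(V)$.

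First I check that $\rho$ is a quandle representation. The map $\varphi_Q$ is a quandle morphism into $Conj(G(Q))$. A group morphism is a quandle morphism for the conjugacy quandle structures, so $\pi$ is a quandle morphism $Conj(G(Q))\to Conj(GL(V))$. Since composites of quandle morphisms are quandle morphisms, $\rho(x\triangleright y)=\rho(x)\rho(y)\rho(x)^{-1}$ for all $x,y\in Q$. This is also just the statement that $\rho_{G(Q)}=\pi$ under the correspondence of the earlier proposition.

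Next I check that $\rho$ is unitary. Each $\rho(x)=\pi(\varphi_Q(x))$ lies in $\pi(G(Q))$, so it is an isometry for the chosen inner product. Finite dimensionality is inherited from $\pi$.

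Finally I check faithfulness. The map $\rho$ is a composite of two injective maps: $\varphi_Q$ is injective by hypothesis, and $\pi$ is injective since it is faithful. Hence $\rho$ is injective.

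All steps are immediate, and the only substantive input is the previous theorem. The one point to state explicitly is that faithfulness of $\pi$ as a group representation means injectivity as a map of sets. This is exactly what is needed, because injectivity of a quandle representation refers only to the underlying map $Q\to GL(V)$.
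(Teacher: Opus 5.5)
Your proof is correct and is the paper's intended argument. The paper states this corollary without proof, as an immediate consequence of the preceding theorem: compose the faithful finite dimensional unitary representation of $G(Q)$ with the injective quandle morphism $\varphi_Q$, and your checks of the morphism property, unitarity and injectivity supply exactly what it leaves implicit.
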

\begin{remark}
If $\varphi_Q$ is not injective, then $Q$ admit no faithfull representation. Indeed, a representation of $Q$ "factors" throught $G(Q)$.
\end{remark}
\begin{proposition}
Let $Q$ be a finite quandle, the irreducible quandle representations of $Q$ over $\mathbb{C}$ are $1$-dimensional if and only if $G(Q)$ is abelian.
\end{proposition}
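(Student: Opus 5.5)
We prove the two implications separately; throughout we use the correspondence between quandle representations of $Q$ and group representations of $G(Q)$ (the proposition on $\rho\mapsto\rho_{G(Q)}$ and the corollary that $\rho$ is irreducible if and only if $\rho_{G(Q)}$ is).

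If $G(Q)$ is abelian, let $\rho:Q\to GL(V)$ be a finite dimensional irreducible quandle representation over $\mathbb{C}$. Then $\rho_{G(Q)}$ is an irreducible representation of the abelian group $G(Q)$. Its image is a commuting family of endomorphisms of $V$, so it has a common eigenvector $v$. The line $\mathbb{C}v$ is stable under $\rho(Q)$, and irreducibility forces $V=\mathbb{C}v$, so $\dim V=1$. (Here we understand ``irreducible representations'' as finite dimensional ones, as throughout the paper.)

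Conversely, assume every finite dimensional irreducible representation of $Q$ over $\mathbb{C}$ is $1$-dimensional. By the theorem of Section 4, $G(Q)$ has a faithful finite dimensional unitary representation $\pi:G(Q)\to GL(W)$. By the theorem of Section 2 applied to the quandle representation $\pi\circ\varphi_Q$ (which is unitary, hence completely reducible), we may write $W=\bigoplus_i W_i$ with each $W_i$ irreducible for $Q$. By the proposition identifying subrepresentations, each $W_i$ is also $G(Q)$-stable. By hypothesis each $W_i$ is $1$-dimensional, so in a basis adapted to this decomposition $\pi(G(Q))$ consists of diagonal matrices and is therefore abelian. Since $\pi$ is faithful, $G(Q)\cong\pi(G(Q))$ is abelian.

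The only delicate point is the converse direction. Knowing that irreducibles are $1$-dimensional says nothing about a group with non-semisimple representations, and it is exactly the faithful \emph{unitary} representation from Section 4 that makes complete reducibility available. No further obstacle is expected: the argument is a direct combination of the earlier results.
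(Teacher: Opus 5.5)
Your proof is correct and follows the paper's argument: the abelian direction comes from the fact that irreducibles of an abelian group are $1$-dimensional. The converse decomposes the faithful unitary representation of $G(Q)$ from Section 4 into $1$-dimensional summands, so that $G(Q)$ embeds in a group of diagonal matrices. Your finite-dimensionality caveat is harmless, since the paper later cites \cite{GQ} that irreducible representations of a finite quandle over $\mathbb{C}$ are automatically finite dimensional.
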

\begin{proof}
If $G(Q)$ is abelian all the irreducible representations of $G(Q)$ and hence of $Q$ are $1$-dimensional. If all the irreducible quandle representations of $Q$ are $1$-dimensional then all irreducible representations of $G(Q)$ are $1$-dimensional. By the previous theorem $G(Q)$ admits a faithfull unitary finite dimensional representation. In particular, this representation is faithfull and is completely reducible into a direct sum of $1$-dimensional representations. Hence, $G(Q)$ is isomorphic to a subgroup of $(\mathbb{C}^\times)^n$ for some $n>0$ and $G(Q)$ is abelian.
\end{proof}
In \cite{ab}, the authors describe finite quandles with abelian enveloping group.

\section{Irreducible representations of some quandles}
Let $n$ and $m$ be two nonzero natural numbers. Let $Q_{n,m}$ be the set $$\{x_i \vert i \in \mathbb{Z}/n\mathbb{Z} \} \cup \{y_i \vert i \in \mathbb{Z}/m\mathbb{Z} \}$$ equipped with the binary operation defined by :
$$x_i \triangleright y_j=y_{j+1},\quad y_i\triangleright x_j=x_{j+1} ,\quad x_{i}\triangleright x_j=x_j,\quad y_i\triangleright y_j=y_j.$$
With the above binary operation $Q_{n,m}$ is a quandle. These quandles appear in \cite{EM}. 

\begin{proposition}\label{ir}
Let $\rho : Q_{n,m} \to GL(V)$ be a quandle representation. $\rho$ is irreducible if and only if $\rho(x_1)$ and $\rho(y_1)$ has no common stable subsbace other than $0$ and $V$.
\end{proposition}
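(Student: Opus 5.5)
The plan is to show that a subspace $W\subset V$ is stable under all of $\rho(Q_{n,m})$ as soon as it is stable under $\rho(x_1)$ and $\rho(y_1)$. The statement then follows directly: the subrepresentations of $\rho$ are exactly the common stable subspaces of $\rho(x_1)$ and $\rho(y_1)$. One direction needs nothing: a subrepresentation is by definition stable under every $\rho(q)$, in particular under $\rho(x_1)$ and $\rho(y_1)$. So if $\rho(x_1)$ and $\rho(y_1)$ have no common stable subspace other than $0$ and $V$, then $\rho$ is irreducible.

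For the other direction, I would first express every $\rho(x_j)$ and $\rho(y_j)$ in terms of $\rho(x_1)$ and $\rho(y_1)$ using the quandle relations. Since $y_1\triangleright x_j=x_{j+1}$, the morphism property gives
$$\rho(x_{j+1})=\rho(y_1)\rho(x_j)\rho(y_1)^{-1}.$$
By induction on $j$ this yields
$$\rho(x_{j})=\rho(y_1)^{j-1}\rho(x_1)\rho(y_1)^{-(j-1)} \quad\text{for } j=1,\dots,n.$$
These indices exhaust $\mathbb{Z}/n\mathbb{Z}$. In the same way, $x_1\triangleright y_j=y_{j+1}$ gives
$$\rho(y_j)=\rho(x_1)^{j-1}\rho(y_1)\rho(x_1)^{-(j-1)} \quad\text{for } j=1,\dots,m.$$

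The only subtle point is that these formulas involve the inverses $\rho(x_1)^{-1}$ and $\rho(y_1)^{-1}$, while the hypothesis only gives stability under $\rho(x_1)$ and $\rho(y_1)$. I would handle this by finite dimensionality. If $A\in GL(V)$ and $AW\subset W$, then $A|_W$ is injective, so $\dim AW=\dim W$ and hence $AW=W$. Therefore $A^{-1}W=W$. (If $V$ were not assumed finite dimensional, this step would fail, so I will use the standing finite-dimensionality assumption of the section.)

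Combining the two steps, any $W$ stable under $\rho(x_1)$ and $\rho(y_1)$ is stable under their inverses. By the displayed formulas it is then stable under every $\rho(x_j)$ and $\rho(y_j)$, so $W$ is a subrepresentation. Consequently, if $\rho$ is irreducible, then $W=0$ or $W=V$, which is the remaining implication.
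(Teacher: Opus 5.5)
Your proof is correct and is essentially the paper's: both write $\rho(x_j)=\rho(y_1)^{j-1}\rho(x_1)\rho(y_1)^{1-j}$ and $\rho(y_j)=\rho(x_1)^{j-1}\rho(y_1)\rho(x_1)^{1-j}$ and conclude that a common stable subspace of $\rho(x_1)$ and $\rho(y_1)$ is a subrepresentation. Your observation that, in finite dimension, such a subspace is automatically stable under $\rho(x_1)^{-1}$ and $\rho(y_1)^{-1}$ supplies a step the paper uses without comment.

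Finite-dimensionality is not actually a standing hypothesis of the statement, but over $\mathbb{C}$ it costs nothing. The nontrivial direction starts from an irreducible $\rho$, and irreducible representations of finite quandles over $\mathbb{C}$ are finite dimensional (the result the paper quotes right afterwards). Alternatively, $\rho(x_1)^{-1}\rho(x_2)$ commutes with all of $\rho(Q_{n,m})$ and has $n$-th power the identity, so in an irreducible representation its eigenspaces force it to be a scalar $\omega$; this gives $\rho(x_j)=\omega^{j-1}\rho(x_1)$ (and likewise for the $y_j$), with no inverses needed in any dimension.
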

\begin{proof}
If $\rho(x_1)$ and $\rho(y_1)$ has no common stable subsbace other then $0$ and $V$ then the representation is clearly irreducible. Now assume that $\rho(x_1)$ and $\rho(y_1)$ has a common stable subsbace $W$ other than $0$ and $V$. We have that $\rho(x_i)=\rho(y_1)^{i-1}\rho(x_1)\rho(y_1)^{1-i}$ and $\rho(y_i)=\rho(x_1)^{i-1}\rho(y_1)\rho(x_1)^{1-i}$. Hence, $W$ is stable under any element of $\rho(Q_{n,m})$. This proves that the representation is not irreducible. Therefore, if the representation is irreducible then $\rho(x_1)$ and $\rho(y_1)$ has no common stable subsbace other than $0$ and $V$. We have proved the equivalence.
\end{proof}
\begin{proposition}\cite{GQ}
An irreducible representation of a finite quandle over $\mathbb{C}$ is finite dimensional.
\end{proposition}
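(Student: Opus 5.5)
Via the enveloping group we are reduced to a statement about group representations: by the correspondence proposition, an irreducible representation $\rho : Q \to GL(V)$ gives an irreducible representation $\rho_{G(Q)} : G(Q)\to GL(V)$ with the same invariant subspaces, so it suffices to prove that $V$ is finite dimensional when $\rho_{G(Q)}$ is irreducible. Here $V$ may be of arbitrary dimension; irreducibility means only that there is no invariant subspace other than $0$ and $V$, with no topology involved.

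\textbf{Finite generation over a commutative algebra.} By proposition \ref{Z0}, $Z_0$ is central of finite index in $G(Q)$ and is generated by the finitely many elements $x^n$ for $x\in Q$. Choose coset representatives $g_1,\dots,g_k$ of $Z_0$ in $G(Q)$ and a nonzero vector $v\in V$. The subspace $\sum_j \mathbb{C}[\rho(Z_0)]\,\rho(g_j)v$ is $G(Q)$-stable and nonzero, hence equals $V$. Thus $V$ is a finitely generated module over the commutative algebra $A$, the image of the group algebra $\mathbb{C}[Z_0]$ in $\mathrm{End}(V)$. Since $Z_0$ is a finitely generated abelian group, $A$ is a finitely generated commutative $\mathbb{C}$-algebra.

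\textbf{Each central element acts by a scalar.} Fix $z\in Z_0$; we want $\rho(z)$ to be a scalar. Because $\rho(z)$ commutes with $G(Q)$, for every $\lambda\in\mathbb{C}$ both $(\rho(z)-\lambda)V$ and $\ker(\rho(z)-\lambda)$ are $G(Q)$-stable, so each is either $0$ or $V$. If for some $\lambda$ the map $\rho(z)-\lambda$ fails to be bijective, it is either non-injective or non-surjective, and in both cases it is zero, so $\rho(z)=\lambda$. It remains to exclude the case where $\rho(z)-\lambda$ is invertible for every $\lambda$. Then $V$ becomes a module over $\mathbb{C}(t)$ with $t$ acting as $\rho(z)$, so $V$ would be a finitely generated $\mathbb{C}[t]$-module on which every $t-\lambda$ acts invertibly. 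A finitely generated $\mathbb{C}[t]$-module is a finite sum of cyclic modules $\mathbb{C}[t]/(f)$, and since every $t-\lambda$ acts invertibly there is no torsion, so $V$ would be free of positive rank, which is impossible. This step needs care: the algebra $A$ is finitely generated but not necessarily a finite $\mathbb{C}[t]$-module. The cleaner route is the generic freeness/Nullstellensatz argument. Any field $K$ that is a quotient of $A$ is a finitely generated $\mathbb{C}$-algebra, hence equal to $\mathbb{C}$ by Zariski's lemma. More directly, Schur's lemma in Dixmier's form applies: $V$ has countable dimension, since it is a finitely generated module over the countably generated algebra $\mathbb{C}[G(Q)]$ ($G(Q)$ is finitely generated). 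The commutant $D=\mathrm{End}_{G(Q)}(V)$ is a division algebra injecting into $V$ via $\phi\mapsto\phi(v)$, so $D$ has countable dimension. If some $\phi\in D$ were transcendental over $\mathbb{C}$, the elements $(\phi-\lambda)^{-1}$ for $\lambda\in\mathbb{C}$ would be uncountably many linearly independent elements of $D$, a contradiction. Since $\mathbb{C}$ is algebraically closed, $D=\mathbb{C}$, so each $\rho(z)$ is a scalar. I expect this Dixmier-type Schur lemma to be the main obstacle, and it is the step I would write out carefully.

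\textbf{Conclusion.} Since $Z_0$ acts by scalars, $\mathbb{C}[\rho(Z_0)]=\mathbb{C}$, and the finite-generation step gives $V=\sum_{j=1}^k \mathbb{C}\,\rho(g_j)v$. Hence $\dim V\le k=[G(Q):Z_0]<\infty$.
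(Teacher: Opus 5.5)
Your proof is correct. The paper gives no argument for this statement and simply cites \cite{GQ}, so your route is necessarily different. You make it self-contained using only Proposition~\ref{Z0}: the finite-index central subgroup $Z_0$ gives $V=\sum_j A\,\rho(g_j)v$, where $A$ is the commutative image of $\mathbb{C}[Z_0]$. You combine this with Dixmier's countable-dimension form of Schur's lemma, which applies because $G(Q)$ is finitely generated. What this buys over the citation is an explicit bound $\dim V\le [G(Q):Z_0]$. You also get $\mathrm{End}_{G(Q)}(V)=\mathbb{C}$, so the whole center acts by scalars, before finite dimensionality is known.

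Two points to tidy up.

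First, $\rho$ and $\rho_{G(Q)}$ do not in general have the same invariant subspaces in infinite dimension. Take the one-element quandle $\{x\}$ with $\rho(x)$ acting as multiplication by $t$ on $\mathbb{C}[t,t^{-1}]$: the subspace $\mathbb{C}[t]$ is $\rho(Q)$-stable but not $\rho(x)^{-1}$-stable. Only the inclusion you actually need holds, namely that every $G(Q)$-stable subspace is $\rho(Q)$-stable. That suffices to deduce that $\rho_{G(Q)}$ is irreducible, so your reduction survives, but the justification should be stated this way.

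Second, remove the abandoned middle argument. Finite generation over $A$ does not make $V$ a finitely generated $\mathbb{C}[t]$-module, and the Nullstellensatz remark is left as a sketch. If you prefer that route to the countability argument, it closes in one line:
\begin{itemize}
\item $A$ acts faithfully on the finitely generated $A$-module $V$, so Nakayama's lemma gives $\mathfrak m V\neq V$ for every maximal ideal $\mathfrak m$ of $A$.
\item Since $A$ commutes with $\rho(G(Q))$, $\mathfrak m V$ is $G(Q)$-stable, hence $0$, so $\mathfrak m=0$.
\item Then $A$ is a field that is finitely generated as a $\mathbb{C}$-algebra, so $A=\mathbb{C}$ by Zariski's lemma.
\end{itemize}
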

\begin{proposition}\label{ir2}
Let $\rho :Q_{n,m} \to GL(V)$ be an irreducible finite dimensional quandle representation over $\mathbb{C}$. If $V$ is not $1$-dimensional then :
\item[1)]  For some primitive $d$-th root of unity $\alpha$ with some $d$ dividing $m$ and $n$ :
$$\rho(y_1)\rho(x_1)\rho(y_1)^{-1}=\alpha \rho(x_1)$$
\item[2)] $\rho(y_1)^d=\lambda id$ for some $\lambda \in \mathbb{C}^\times$.
\item[3)] If $v$ is an eigenvector of $\rho(x_1)$ associated to the eigenvalue $\beta$, then the family 
$$\{v,\rho(y_1)(v),\dots,\rho(y_1)^{d-1}(v)\}$$ form a basis of $V$ and $$\rho(x_1)(\rho(y_1)^i(v))=\frac{\beta}{\alpha^i}\rho(y_1)^i(v),$$
for $i=0,\dots,d-1$. 
\end{proposition}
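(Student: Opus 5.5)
The plan is to write $X=\rho(x_1)$, $Y=\rho(y_1)$ and show that the commutator $C=XYX^{-1}Y^{-1}$ is a scalar. Statements 1)--3) should then follow from Schur's lemma and an eigenvector computation, using Proposition \ref{ir} to reduce irreducibility of $\rho$ to irreducibility of the pair $(X,Y)$.

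\textbf{Step 1: $C$ commutes with $X$ and $Y$.} I will extract the relations from the quandle axioms of $Q_{n,m}$.
\begin{itemize}
\item Since $x_1\triangleright y_1=y_2=x_2\triangleright y_1$, we get $XYX^{-1}=\rho(x_2)Y\rho(x_2)^{-1}$.
\item Since $y_1\triangleright x_1=x_2$, we have $\rho(x_2)=YXY^{-1}$.
\item Substituting the second relation into the first says exactly that $C=XYX^{-1}Y^{-1}$ commutes with $Y$.
\item Symmetrically, $y_1\triangleright x_1=x_2=y_2\triangleright x_1$ with $\rho(y_2)=XYX^{-1}$ gives that $XY^{-1}X^{-1}Y$ commutes with $X$.
\item Now $XY^{-1}X^{-1}Y=Y^{-1}C^{-1}Y$, and this equals $C^{-1}$ because $C$ commutes with $Y$. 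Hence $C$ also commutes with $X$.
\end{itemize}
By Proposition \ref{ir}, the pair $(X,Y)$ has no common invariant subspace other than $0$ and $V$. Schur's lemma over $\mathbb{C}$ ($V$ is finite dimensional) then gives $C=\alpha^{-1}\,\mathrm{id}$ for some $\alpha\in\mathbb{C}^\times$. Equivalently, $XYX^{-1}=\alpha^{-1}Y$, and hence $YXY^{-1}=\alpha X$.

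\textbf{Step 2: $\alpha$ is a root of unity.} Iterating the relation gives $\rho(x_{i})=Y^{i-1}XY^{1-i}=\alpha^{i-1}X$. Since $x_{1+n}=x_1$, we get $\alpha^n=1$. Similarly $\rho(y_i)=X^{i-1}YX^{1-i}=\alpha^{1-i}Y$ together with $y_{1+m}=y_1$ gives $\alpha^m=1$. Let $d$ be the order of $\alpha$; then $d$ divides both $n$ and $m$, which proves 1). For 2), note that $Y^dXY^{-d}=\alpha^dX=X$, so $Y^d$ commutes with both $X$ and $Y$, and Schur's lemma again makes it a scalar $\lambda\,\mathrm{id}$.

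\textbf{Step 3: the basis.} Let $v$ be an eigenvector of $X$ with eigenvalue $\beta$ (one exists since we are over $\mathbb{C}$). From $XY=\alpha^{-1}YX$ we get $X Y^i v=\beta\alpha^{-i}Y^iv$. For $0\le i\le d-1$ these eigenvalues are pairwise distinct, so the vectors $Y^iv$ are linearly independent. Their span is stable under $X$ (each is an eigenvector) and under $Y$ (because $Y\cdot Y^{d-1}v=\lambda v$). By irreducibility, via Proposition \ref{ir}, the span is all of $V$, which gives 3).

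The hypothesis $\dim V>1$ is used only implicitly: it forces $d>1$, since if $d=1$ the span would be the single line $\mathbb{C}v$. The main obstacle is Step 1, namely choosing the right pair of quandle relations so that the commutator becomes central. Everything after that is routine.
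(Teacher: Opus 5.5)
Your proof is correct, and from the point where the commutator is known to be scalar it runs parallel to the paper: periodicity of the indices gives $\alpha^n=\alpha^m=1$, $Y^d$ is central hence scalar, and the $Y$-orbit of an eigenvector of $X$ is a basis. The real difference is how you show that $C=XYX^{-1}Y^{-1}$ is scalar.

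The paper works in the enveloping group. Since $L_{x_1}=L_{x_2}$ and $G(L_x)=c_x$ (from the proof of Proposition \ref{Z0}), conjugation by $x_1$ and by $x_2=y_1\triangleright x_1$ coincide in $G(Q_{n,m})$. Hence $x_1(y_1\triangleright x_1)^{-1}$, whose image under $\rho_{G(Q_{n,m})}$ is exactly your $C$, is central in $G(Q_{n,m})$, and the center acts by scalars in an irreducible representation.

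You instead check by hand that $C$ commutes with $X$ and $Y$, using the relations $x_1\triangleright y_1=x_2\triangleright y_1$ and $y_1\triangleright x_1=y_2\triangleright x_1$. You then apply Schur to the pair $(X,Y)$ through Proposition \ref{ir}.

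Your route is elementary and self-contained: it needs no universal property and no functoriality of $G$. The paper's route explains why the computation works (all $x_i$ induce the same left translation). It also gives the stronger, $\rho$-independent fact that the commutator is central in $G(Q_{n,m})$.

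Two small remarks:
\begin{itemize}
\item The literal mirror image of your first bullet gives directly that $YXY^{-1}X^{-1}=C^{-1}$ commutes with $X$. Your version with $XY^{-1}X^{-1}Y$ is also correct, being equivalent to the same relation, but it needs the extra conjugation step.
\item The paper proves $\alpha\neq 1$ separately, by showing that commuting $X,Y$ would both be scalar. You let Step 3 run for the order $d$ of $\alpha$ and read off $d=\dim V>1$. This is slightly more economical and identifies $d$ explicitly.
\end{itemize}
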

\begin{proof}
As we have seen in the first section any quandle morphism $f$ induces a morphism of envelopping groups $G(f)$. As we have argued in the proof of proposition \ref{Z0}, for $x\in Q_{n,m}$ the morphism $G(L_x) : G(Q_{n,m})\to G(Q_{n,m})$ associated to the left translation by $x$ is $c_x$ the conjugacy by the generator $x$ of $G(Q_{n,m})$. Since, $L_{x_1}=L_{x_2}=L_{y_1\triangleright x_1}$, we have that $c_{x_1}=c_{y_1\triangleright x_1}$. Hence, $x_1( y_1\triangleright x_1)^{-1}$ lies in the center of $G(Q_{n,m})$. Since the representation is irreducible (and hence finite dimensional) the center of $G(Q_{n,m})$ acts by multiplication by scalars with repspect to the representation of ${G(Q_{n,m})}\to GL(V)$ induced by $\rho$ (by the universal property of $G(Q_{n,m}))$. This proves that : $$\rho(y_1)\rho(x_1)\rho(y_1)^{-1}=\alpha \rho(x_1),$$
for some $\alpha \in \mathbb{C}^\times$. We will write $y_1^n\triangleright x_1$ for the iteration $n$ times of $y_1\triangleright$ on $x_1$ : $$y_1\triangleright (y_1\triangleright( \cdots (y_1\triangleright x_1)\cdots)).$$
We have that : 
$$ \rho(y_1^n\triangleright x_1)=\rho(x_1),$$
and by the equation $\rho(y_1)\rho(x_1)\rho(y_1)^{-1}=\alpha \rho(x_1)$ we obtained before, we have that :
$$\rho(y_1^n\triangleright x_1)=\rho(y_1)^n\rho(x_1)\rho(y_1)^{-n}=\alpha^n\rho(x_1).$$
Therefore $\alpha^n=1$. This proves that $\alpha$ is an $n$-root of unity. Manipulating the equation $\rho(y_1)\rho(x_1)\rho(y_1)^{-1}=\alpha \rho(x_1)$, we get that :
$$\rho(x_1)\rho(y_1)\rho(x_1)^{-1}=\alpha^{-1} \rho(y_1).$$
Applying the same reasoning as just before with $\rho(x_1^m\triangleright y_1)$ we get that $\alpha$ is an $m$-th root of unity. This proves that $\alpha$ is a $n$-th and $m$-th root of unity. We still need to prove that $\alpha \neq 1$ to complete the proof of $1$. Assume $\alpha=1$. This implies that $\rho(x_1)$ and $\rho(y_1)$ commute and hence any eigenspace of these two automorphisms of $V$ is a common stable subspace. It follows from the proposition \ref{ir} since $\rho $ is irreducible that $\rho(x_1)$ and $\rho(y_1)$ are scalar multiple of the identity. But this contradict the same proposition since $V$ is not $1$-dimensional. Hence, $\alpha \neq 1$. This completes the proof of $1)$. We now prove $2)$. The equation in $1)$ of the proposition proves that $\rho(y_1)^d$ commutes with $\rho(x_1)$. It obviously also commute with $\rho(y_1)$. Hence, an eigenspace of $\rho(y_1)^d$ is a stable subspace for $\rho(y_1)$ and $\rho(x_1)$. It follows from proposition \ref{ir} since $\rho$ is irreducible that an eigenspace of $\rho(y_1)^d$ is equal to $V$. This proves that $\rho(y_1)^d$ is a scalar multiple of the identity. We prove $3)$. Using the equation in $1)$ one shows that if $w$ is associated to the eigenvalue $\theta$ with repsepect to $\rho(x_1)$ then $\rho(y_1)w$ is an eigenvector associated to the eigenvalue $\frac{\theta}{\alpha}$. Applying, this result and an induction one gets the equation : 
$$\rho(x_1)(\rho(y_1)^i(v))=\frac{\beta}{\alpha^i}\rho(y_1)^i(v),$$
of $3)$. Since $\alpha$ is a primitive root it follows that the vectors of the family $$\{v,\rho(y_1)(v),\dots,\rho(y_1)^{d-1}(v)\}$$ are associated to different eigenvalues of $\rho(x_1)$ and hence the family is free. Denote by $W$ the subspace spanned by this family. $W$ is stable under $\rho(x_1)$ since it consist of eigenvectors of $\rho(x_1)$. It follows from $2)$ of this proposition that $W$ is stable by $\rho(y_1)$. Hence, $W$ is stable by $\rho(x_1)$ and $\rho(y_1)$ and therefore by the proposition \ref{ir} $W=V$ since $\rho$ is irreducible. This proves that the family in $3)$ of this proposition generates $V$. We have already seen that it is free. This completes the proof of $3)$.
\end{proof}
\begin{proposition}
Let $d$ be a positive integer greater with $d>1$ divising $n$ and $m$, $\alpha$ be a primitive $d$-th root of the unity and $\lambda, \beta \in \mathbb{C}^\times$ :
\item[1)] The exists a unique quandle representation $\rho_{\alpha,\lambda,\beta} : Q_{n,m} \to GL_d(\mathbb{C})$, such that :
 $$\rho_{\alpha,\lambda,\beta}(x_1)=\begin{pmatrix}
    \beta & 0   & \dots& \dots &  0 \\
     0 &\frac{\beta}{\alpha}   & \ddots&  &\vdots \\
    \vdots & \ddots &  \ddots & \ddots &\vdots \\
\vdots &   & \ddots  &\ddots &0\\
   0 &   \dots & \dots  & 0&\frac{\beta}{\alpha^{d-1}}
\end{pmatrix} \quad   \rho_{\alpha,\lambda,\beta}(x_2)=\begin{pmatrix}
   0 & 0   & \dots& 0 &  \lambda \\
     1 & \ddots   & \ddots&  &0\\
    0 & \ddots &  \ddots & \ddots &\vdots \\
\vdots &  \ddots & \ddots  &\ddots &0\\
   0 &   \dots & 0 & 1&0
\end{pmatrix}.$$
\item[2)] The quandle representation $\rho_{\alpha,\lambda,\beta}$ is given by the following : 
$$\rho _{\alpha,\lambda,\beta}(x_i)=\alpha^{i-1}\rho_{\alpha,\lambda,\beta}(x_1) \quad \text{and} \quad \rho_{\alpha,\lambda,\beta}(y_j)=\alpha^{1-j}\rho_{\alpha,\lambda,\beta}(y_1),$$
for $i\in \mathbb{Z}/n\mathbb{Z}$ and $j\in \mathbb{Z}/m\mathbb{Z}$.
\item[3)] The commutator of $(\rho(y_1),\rho(x_1))$ is equal to $\alpha id$.
\item[4)] The representation $\rho_{\alpha,\lambda,\beta}$ is irreducible.
\end{proposition}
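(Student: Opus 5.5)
I will read the second displayed matrix as the intended value of $\rho_{\alpha,\lambda,\beta}(y_1)$; the value of $x_2$ is then forced by 2). Write $X$ for the diagonal matrix and $Y$ for the second matrix. Let $(e_1,\dots,e_d)$ be the canonical basis, so that
$$Xe_k=\frac{\beta}{\alpha^{k-1}}e_k,\qquad Ye_k=e_{k+1}\ (k<d),\qquad Ye_d=\lambda e_1 .$$

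\textbf{Key identity.} The first step is to check $YXY^{-1}=\alpha X$ on basis vectors. For $k<d$ we have $YXY^{-1}e_{k+1}=YXe_k=\frac{\beta}{\alpha^{k-1}}e_{k+1}=\alpha Xe_{k+1}$. For $e_1=\lambda^{-1}Ye_d$ we get $YXY^{-1}e_1=\frac{\beta}{\alpha^{d-1}}e_1=\alpha\beta e_1$, using $\alpha^d=1$. Equivalently $XYX^{-1}=\alpha^{-1}Y$. This is exactly 3), since the commutator $YXY^{-1}X^{-1}$ equals $\alpha\, id$.

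\textbf{Existence and 2).} Define $\rho(x_i)=\alpha^{i-1}X$ and $\rho(y_j)=\alpha^{1-j}Y$. This is well defined on $\mathbb{Z}/n\mathbb{Z}$ and $\mathbb{Z}/m\mathbb{Z}$ because $d$ divides $n$ and $m$, so $\alpha^n=\alpha^m=1$. I then verify the four defining relations.
\begin{itemize}
\item[-] $x_i\triangleright x_j=x_j$ and $y_i\triangleright y_j=y_j$ hold because scalar multiples of $X$ commute with each other, and likewise for $Y$.
\item[-] $y_i\triangleright x_j=x_{j+1}$: conjugating by a scalar multiple of $Y$ gives $\alpha\cdot\alpha^{j-1}X=\rho(x_{j+1})$.
\item[-] $x_i\triangleright y_j=y_{j+1}$: conjugating by a scalar multiple of $X$ gives $\alpha^{-1}\alpha^{1-j}Y=\rho(y_{j+1})$.
\end{itemize}

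\textbf{Uniqueness.} As in the proof of Proposition \ref{ir}, any representation satisfies $\rho(x_i)=\rho(y_1)^{i-1}\rho(x_1)\rho(y_1)^{1-i}$ and $\rho(y_j)=\rho(x_1)^{j-1}\rho(y_1)\rho(x_1)^{1-j}$. Hence $\rho(x_1)$ and $\rho(y_1)$ determine $\rho$, and by the key identity the result is the formula in 2).

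\textbf{Irreducibility, 4).} I use Proposition \ref{ir}, so it suffices to show that $X$ and $Y$ have no common stable subspace other than $0$ and $\mathbb{C}^d$. Since $\alpha$ is a primitive $d$-th root of unity, the eigenvalues $\beta\alpha^{-k}$ of $X$, for $k=0,\dots,d-1$, are pairwise distinct. So every $X$-stable subspace $W$ is spanned by a subset of $\{e_1,\dots,e_d\}$. If $W\neq 0$, it contains some $e_k$. Since $Y$ cyclically permutes the lines $\mathbb{C}e_1,\dots,\mathbb{C}e_d$ and $\lambda\neq 0$, the $Y$-stability of $W$ forces all the $e_l$ into $W$, so $W=\mathbb{C}^d$.

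The only delicate points are the wrap-around entry $\lambda$ and the exponent bookkeeping modulo $d$ in the key identity; everything else is routine.
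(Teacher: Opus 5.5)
Your proof is correct and follows the paper's argument: check $YX=\alpha XY$, use it to verify the defining relations and 2), get uniqueness from the conjugation formulas in the proof of Proposition \ref{ir}, read off 3), and obtain 4) from Proposition \ref{ir} using the distinct eigenvalues of $X$ and the cyclic action of $Y$. Reading the second matrix as $\rho_{\alpha,\lambda,\beta}(y_1)$ is the right correction, since the literal $x_2$ would contradict 2), and it is what the paper's own proof actually uses.
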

\begin{proof}
Let $A$ be the matrix corresponding to $\rho_{\alpha,\lambda,\beta}(x_1)$ and $B$ the one corresponding to $\rho_{\alpha,\lambda,\beta}(x_1)$ in $1)$. One checks that $BA=\alpha AB$ this implies that $AB=\frac{1}{\alpha}BA$. These equations with the defining binary operation of $Q_{n,m}$ allow to show that $\rho_{\alpha,\lambda,\beta}$ defined by equations of $1)$ and $2)$ is indeed a quandle representation. This proves the existence of $\rho_{\alpha,\lambda,\beta}$ and that $2)$ follows from the uniqueness in $1)$. The uniqueness in $1)$ follows from the fact that 
$$\rho_{\alpha,\lambda,\beta}(x_i)=\rho_{\alpha,\lambda,\beta}(y_1^{i-1}\triangleright x_1)=\rho_{\alpha,\lambda,\beta}(y_1)^{i-1}\rho_{\alpha,\lambda,\beta}(x_1)\rho_{\alpha,\lambda,\beta}(y_1)^{1-i},$$
and $$\rho_{\alpha,\lambda,\beta}(y_j)=\rho_{\alpha,\lambda,\beta}(x_1^{j-1}\triangleright y_1)=\rho_{\alpha,\lambda,\beta}(x_1)^{j-1}\rho_{\alpha,\lambda,\beta}(y_1)\rho_{\alpha,\lambda,\beta}(x_1)^{1-j},$$
where we denote the $n$-times iteration of $x\triangleright$ on $y$ by $x^n\triangleright y$. We have proved $1)$ and $2)$. $3)$ follows from the equation $BA=\alpha AB$ we obtained earlier. Now let $W$ be a subspace stable under $\rho(x_1)$ and $\rho(x_2)$. If $W\neq 0$, then $W$ contains an eigenvector $v$ of $\rho(x_1)$. But as one checks the family $\{v,\rho_{\alpha,\lambda,\beta}(y_1)(v),\dots,\rho_{\alpha,\lambda,\beta}(y_2)^{d-1}(v) \}$ spans $V$. Hence, $W=V$ if $W\neq 0$. It follows from proposition \ref{ir}, that $\rho_{\alpha,\lambda,\beta}$ is irreducible. We have proved $4)$ and completed the proof of the proposition.
\end{proof}
\begin{theorem}
\item[1)] The irreducible quandle representations of $Q_{n,m}$ of dimension greater then $1$ are up to equivalence the representations $\rho_{\alpha,\lambda,\beta}$ as in the previous proposition. 
\item[2)] Two representations $\rho_{\alpha,\lambda,\beta}$ and $\rho_{\alpha',\lambda',\beta'}$ are equivalent if and only if $\alpha=\alpha'$, $\lambda = \lambda'$ and  $\beta=\beta'\alpha^i$ for some $i$.
\end{theorem}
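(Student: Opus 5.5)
The plan is to reduce everything to Proposition \ref{ir2}, which already pins down the shape of an irreducible representation of dimension greater than $1$, and then to use the uniqueness in $1)$ of the previous proposition. (I read the second matrix in $1)$ of the previous proposition as $\rho_{\alpha,\lambda,\beta}(y_1)$, since $3)$ and $4)$ of that proposition and its proof treat it that way.)

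For $1)$, let $\rho:Q_{n,m}\to GL(V)$ be irreducible with $\dim V>1$; by the cited result of \cite{GQ}, $V$ is finite dimensional. Proposition \ref{ir2} gives a primitive $d$-th root of unity $\alpha$ with $d\mid n$ and $d\mid m$, and $d>1$ since $\alpha\neq 1$. It also gives $\lambda\in\mathbb{C}^\times$ with $\rho(y_1)^d=\lambda\, id$. By the corollary in Section 2, $\rho(x_1)$ is diagonalizable, so it has an eigenvector $v$, with eigenvalue $\beta$ say. By $3)$ of Proposition \ref{ir2}, the family $e_i=\rho(y_1)^i(v)$, $i=0,\dots,d-1$, is a basis of $V$, so $\dim V=d$, and $\rho(x_1)e_i=\frac{\beta}{\alpha^i}e_i$. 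In this basis:
\begin{itemize}
\item $\rho(x_1)$ is exactly the diagonal matrix of $\rho_{\alpha,\lambda,\beta}(x_1)$;
\item $\rho(y_1)e_i=e_{i+1}$ for $i<d-1$, and $\rho(y_1)e_{d-1}=\rho(y_1)^d v=\lambda e_0$, which is exactly the matrix of $\rho_{\alpha,\lambda,\beta}(y_1)$.
\end{itemize}
Hence $\rho$ and $\rho_{\alpha,\lambda,\beta}$ agree on $x_1$ and $y_1$ after this change of basis. By the uniqueness in $1)$ of the previous proposition, which rests on $x_i=y_1^{i-1}\triangleright x_1$ and $y_j=x_1^{j-1}\triangleright y_1$, they agree on all of $Q_{n,m}$. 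So $\rho$ is equivalent to $\rho_{\alpha,\lambda,\beta}$.

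For the "only if" direction of $2)$, I would extract invariants of the equivalence class. The scalar $\rho(y_1)\rho(x_1)\rho(y_1)^{-1}\rho(x_1)^{-1}$ is unchanged by conjugating the representation by an intertwiner, and by $3)$ of the previous proposition it equals $\alpha\, id$ (up to a fixed inversion convention). Hence $\alpha=\alpha'$, and in particular $d=d'$. Similarly, $\rho(y_1)^d=\lambda\, id$ is conjugation invariant, which forces $\lambda=\lambda'$. Finally, the spectrum of $\rho(x_1)$ is $\{\beta\alpha^{-i}\}_{i}$ and is also invariant. So $\beta'$ lies in $\beta\alpha^{\mathbb{Z}}$, which is the condition $\beta=\beta'\alpha^i$ with $i$ read modulo $d$.

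For the converse, take $\rho=\rho_{\alpha,\lambda,\beta}$ and the eigenvector $w=e_k=\rho(y_1)^k e_0$ of $\rho(x_1)$, whose eigenvalue is $\beta\alpha^{-k}$. Rerunning the basis argument of $1)$ with $w$ in place of $v$ shows $\rho_{\alpha,\lambda,\beta}\simeq\rho_{\alpha,\lambda,\beta\alpha^{-k}}$. The scalar $\lambda$ is unchanged because $\rho(y_1)^d=\lambda\, id$ holds globally. As $k$ ranges over $\mathbb{Z}/d$, this covers every $\beta'$ with $\beta=\beta'\alpha^i$.

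I expect no real obstacle; the main care point is bookkeeping of conventions. One must check that the commutator scalar in $3)$ is $\alpha$ and not $\alpha^{-1}$ in the chosen order, so that the invariant really recovers $\alpha$ itself. One must also confirm that the corner entry $\lambda$ gives $B^d=\lambda I$ for the companion-type matrix $B$ of $\rho_{\alpha,\lambda,\beta}(y_1)$, so that $\lambda$ is a genuine invariant.
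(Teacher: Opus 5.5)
Your proof is correct and follows the paper's route. Part 1) comes from 3) of Proposition \ref{ir2} together with the uniqueness in 1) of the previous proposition. Part 2) comes from the commutator $\alpha\,id$, a conjugation invariant that recovers $\lambda$, the spectrum of $\rho(x_1)$, and a change of basis starting at another eigenvector $e_k$; the only difference is that you detect $\lambda$ through $\rho(y_1)^d=\lambda\,id$, where the paper uses $\det\rho(y_1)=(-1)^{d-1}\lambda$. For part 1) you should also cite 4) of the previous proposition, which guarantees that every $\rho_{\alpha,\lambda,\beta}$ is irreducible of dimension $d>1$ and so gives the reverse inclusion.
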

 \begin{proof}
$1)$ follows from $1)$ and $4)$ of the previous proposition and $2)$ and $3)$ of proposition \ref{ir2}. Assume $\rho_{\alpha,\lambda,\beta}$ and $\rho_{\alpha',\lambda',\beta'}$ are equivalent. It follows from $3)$ of the previous proposition that $\alpha=\alpha'$. $\lambda$ and $\lambda'$ are determined by the determinants of $\rho_{\alpha,\lambda,\beta}(y_1)$ and $\rho_{\alpha',\lambda',\beta'}(y_1)$. Hence, $\lambda=\lambda'$. Now $\beta$ and $\beta'$ are elements of the spectrum of $\rho_{\alpha,\lambda,\beta}(x_1)$ and $\rho_{\alpha',\lambda',\beta'}(x_1)$. Hence, $\beta=\beta'\alpha^i$ for some $i$. To complete the proof of $2)$ we still need to show that $\rho_{\alpha,\lambda,\beta}$ and $\rho_{\alpha,\lambda,\beta\alpha^i}$ are equivalent. Let $v$ be the eigenvector of $\rho_{\alpha,\lambda,\beta}(x_1)$ associated to $\beta \alpha^i$. The representation $\rho_{\alpha,\lambda,\beta}$ taken in the basis $\{v,\rho_{\alpha,\lambda,\beta}(y_1)(v),\dots,\rho_{\alpha,\lambda,\beta}(y_1)^{d-1}(v)\}$ is exactly $\rho_{\alpha,\lambda,\beta\alpha^i}$ ($\rho_{\alpha,\lambda,\beta}(y_2)^d=\lambda$). Indeed,  by $1)$ of the previous proposition, one only need to check this for the images of $x_1$ and $y_1$ and this is straitforward. This completes the proof of the theorem.

\end{proof}

 \end{document}